\newcommand{\ifims}[2]{#1}   % latex version
\newcommand{\ifAMS}[2]{#1}   % 1 - AMS, 2 - JEL
\newcommand{\ifau}[3]{#1}  % #1 - 1 author, #2 - 2 authors, #3 - 3 authors
\newcommand{\ifbook}[2]{#1}   % 1 - paper; 2 - book
\def\thetitle{A result on the bias of sieve profile estimators}
\def\theruntitle{A result on the bias of sieve profile estimators}
\def\theabstract{
This paper complements the paper \cite{AASP2013}. We show how to control the bias of a sieve type profile estimator under natural conditions on the Hessian of the expected contrast functional.
%and to a single-index regression model.
} 
\def\kwdp{62F10}
\def\kwds{62H12}
\def\thekeywords{semiparametric estimation, profile estimator, bias}
\def\authora{Andreas Andresen }
\def\runauthora{andreas andresen }
\def\addressa{
    \\ Weierstrass-Institute, \\
    Mohrenstr. 39, \\
    10117 Berlin, Germany     
    }
\def\emaila{andresen@wias-berlin.de}
\def\affiliationa{}%{Weierstrass-Institute}
\def\thanksa{The author is supported by Research Units 1735 
"Structural Inference in Statistics: Adaptation and Efficiency"
}
\date{}
\renewenvironment{abstract}
    {\centerline{\textbf{Abstract}}\bigskip
      \begin{center}
       \begin{minipage}{11cm}
        \begin{small}
    }
    {   \end{small}
       \end{minipage}
      \end{center}
     \bigskip
    }
\numberwithin{equation}{section}
\numberwithin{figure}{section}
\newcounter{example}[section]
\numberwithin{example}{section}
\newcounter{remark}[section]
\numberwithin{remark}{section}
\newtheorem{theorem}{Theorem}[section]
\newtheorem{lemma}[theorem]{Lemma}
\newtheorem{exmp}[example]{Example}
\newtheorem{rmrk}[remark]{Remark}
\newenvironment{example}{\begin{exmp}\rm}{\end{exmp}}
\newenvironment{remark}{\begin{rmrk}\rm}{\end{rmrk}}
\begin{document}
\thispagestyle{empty}
\ifims{
\title{\thetitle}
\ifau{ % 1 author
  \author{
    \authora
    \ifdef{\thanksa}{\thanks{\thanksa}}{}
    \\[5.pt]
    \addressa \\
    \texttt{ \emaila}
  }
}
{  % 2 authors
  \author{
    \authora
    \ifdef{\thanksa}{\thanks{\thanksa}}{}
    \\[5.pt]
    \addressa \\
    \texttt{ \emaila}
    \and
    \authorb
    \ifdef{\thanksb}{\thanks{\thanksb}}{}
    \\[5.pt]
    \addressb \\
    \texttt{ \emailb}
  }
}
{   % 3 authors
  \author{
    \authora
    \ifdef{\thanksa}{\thanks{\thanksa}}{}
    \\[5.pt]
    \addressa \\
    \texttt{ \emaila}
    \and
    \authorb
    \ifdef{\thanksb}{\thanks{\thanksb}}{}
    \\[5.pt]
    \addressb \\
    \texttt{ \emailb}
    \and
    \authorc
    \ifdef{\thanksc}{\thanks{\thanksc}}{}
    \\[5.pt]
    \addressc \\
    \texttt{ \emailc}
  }
}

\maketitle
\pagestyle{myheadings}
\markboth
 {\hfill \textsc{ \small \theruntitle} \hfill}
 {\hfill
 \textsc{ \small
 \ifau{\runauthora}
      {\runauthora and \runauthorb}
      {\runauthora, \runauthorb, and \runauthorc}
 }
 \hfill}
\begin{abstract}
\theabstract
\end{abstract}

\ifAMS
    {\par\noindent\emph{AMS 2000 Subject Classification:} Primary \kwdp. Secondary \kwds}
    {\par\noindent\emph{JEL codes}: \kwdp}

\par\noindent\emph{Keywords}: \thekeywords
} % end front latex
{ % front ims
\begin{frontmatter}
\title{\thetitle}
%\thankstext{T1}{\thankstitle}

% "Title of the paper"

\runtitle{\theruntitle}

\ifau{ % 1 author
\begin{aug}
    \author{\authora\ead[label=e1]{\emaila}}
    \address{\addressa \\
     \printead{e1}}
\end{aug}

 \runauthor{\runauthora}
\affiliation{\affiliationa} }
{ % 2 authors
\begin{aug}
    \author{\authora\ead[label=e1]{\emaila}\thanksref{t21}}
    \and
    \author{\authorb\ead[label=e2]{\emailb}\thanksref{t22}}
    
    \address{\addressa \\
     \printead{e1}}
    \address{\addressb \\
     \printead{e2}}
    \thankstext{t21}{\thanksa}
    \thankstext{t22}{\thanksb}
    \affiliation{\affiliationa, \affiliationb} 
    \runauthor{\runauthora and \runauthorb}
\end{aug}
} 
{ % 3 authors
\begin{aug}
    \author{\authora\ead[label=e1]{\emaila}\thanksref{t21}}
    \and
    \author{\authorb\ead[label=e2]{\emailb}\thanksref{t22}}
    \and
    \author{\authorc\ead[label=e3]{\emailc}\thanksref{t23}}
    
    \address{\addressa \\
     \printead{e1}}
    \address{\addressb \\
     \printead{e2}}
    \address{\addressc \\
     \printead{e3}}
    \thankstext{t21}{\thanksa}
    \thankstext{t22}{\thanksb}
    \thankstext{t23}{\thanksc}
    \affiliation{\affiliationa, \affiliationb, \affiliationc} 
    \runauthor{\runauthora, \runauthorb, and \runauthorc}
\end{aug}}

\begin{abstract}
\theabstract
\end{abstract}

\begin{keyword}[class=AMS]
\kwd[Primary ]{\kwdp}
\kwd[; secondary ]{\kwds}
\end{keyword}

\begin{keyword}
\kwd{\thekeywords}
\end{keyword}

\end{frontmatter}
} % end front ims

\def\ND{\cc{N}}
\def\Bernoulli{\mathrm{Bernoulli}}
\def\Vola{\mathrm{Vola}}
\def\Poisson{\mathrm{Poisson}}
\def\ag{\mathrm{ag}}
\def\glob{\operatorname{glob}}
\def\blk{\operatorname{block}}
\def\lin{\operatorname{lin}}
\def\cond{\, \big| \,}

\def\rdl{\epsilon}
\def\rd{\bb{\rdl}}
\def\rddelta{\delta}
\def\rdomega{\varrho}
\def\rddeltab{\rddelta^{*}}
\def\rhorb{\rhor^{*}}

\def\wv{\bb{w}}
\def\varthetav{\bb{\vartheta}}
\def\Lr{\breve{L}}
\def\zetavr{\breve{\zetav}}
\def\etavr{\breve{\etav}}
\def\xivr{\breve{\xiv}}

\def\rdb{\rd}
\def\rdm{\underline{\rdb}}

\def\taub{\tau_{\rdb}}
\def\taum{\tau_{\rdm}}
\def\kappab{\kappa_{\rd}}
\def\deltab{\delta_{\rd}}

\def\taubGP{\tau_{\rdb,\GP}}
\def\taumGP{\tau_{\rdm,\GP}}
\def\kappabGP{\kappa_{\rd,\GP}}
\def\deltabGP{\delta_{\rd,\GP}}
\def\nubm{\nu_{\rd}}
\def\uub{u_{\rd}}
\def\uubGP{u_{\rd,\GP}}
\def\nubmGP{\nu_{\rd, G}}

\def\rG{\rd,\GP}

\def\LinSp{\mathrm{L}}
\def\Id{I\!\!\!I}
\def\Ind{\operatorname{1}\hspace{-4.3pt}\operatorname{I}}

\def\BG{\mathcal{R}}
\def\bg{r}
\def\fmup{\phi}
\def\rg{r}
\def\uc{u_{c}}
\def\muc{\mu_{c}}
\def\mud{\mu_{0}}
\def\xxd{\xx_{0}}
\def\yyd{\yy_{0}}
\def\gmd{\gm_{0}}

\def\ms{m^{*}}
\def\Inv{A}
\def\InvT{\Inv^{\T}}
\def\Invt{\tilde{\Inv}}

\def\ssize{N}
\def\nsize{{n}}

\def\rhor{\omega}

\def\LT{L}
\def\LGP{\LT_{\GP}}
\def\La{\mathbb{L}}
\def\Lab{\La_{\rdb}}
\def\Lam{\La_{\rdm}}

\def\DP{D}
\def\DPc{\DP_{0}}
\def\DPb{\DP_{\rdb}}
\def\DPm{\DP_{\rdm}}

\def\LabGP{\La_{\rdb,\GP}}
\def\LamGP{\La_{\rdm,\GP}}

\def\DPbGP{\DP_{\rdb,\GP}}
\def\DPmGP{\DP_{\rdm,\GP}}
\def\riskbGP{\riskt_{\rdb,\GP}}

\def\gmi{\mathtt{b}}
\def\gmiid{\mathtt{g}_{1}}
\def\kullbi{\Bbbk}
\def\Thetasi{\Theta_{\loc}}
\def\rri{\mathtt{u}}
\def\rris{\rri_{0}}

\def\Ipc{\bb{\mathrm{f}}}
\def\IF{\Bbb{F}}
\def\IFc{\IF_{0}}
\def\IFb{\IF_{\rdb}}
\def\IFm{\IF_{\rdm}}

\def\DF{\cc{D}}
\def\DFc{\DF_{0}}
\def\DFb{\DF_{\rdb}}
\def\DFm{\breve{\DF}_{\rd}}
\def\DFm{\DF_{\rdm}}

\def\DPr{\breve{\DP}}
\def\VF{\cc{V}}
\def\VFc{\VF_{0}}

\def\HHc{\HH_{0}}
\def\HHb{\HH_{\rd}}
\def\HHm{\HH_{\rdm}}

\def\xib{\xi^{*}}
\def\xivb{\xiv_{\rdb}}
\def\xivm{\xiv_{\rdm}}
\def\CAm{\underline{\CA}}
\def\CAb{\CA}

\def\penr{\operatorname{pen}}
\def\pen{\mathfrak{t}}
\def\PEN{\operatorname{PEN}}
\def\RSS{\operatorname{RSS}}
\def\med{\operatorname{med}}

\def\ex{\mathrm{e}}
\def\entrl{\mathbb{Q}}
\def\entrlb{\entrl}
\def\entr{\entrl}

\def\kullb{\cc{K}} %{\wp}
\def\kullbc{\kullb^{c}}

\def\gm{\mathtt{g}}
\def\gmc{\gm_{c}}
\def\gmb{\gm}
\def\gmbm{\gmb_{1}}

\def\yy{\mathtt{y}}
\def\yyc{\yy_{c}}
\def\xx{\mathtt{x}}
\def\xxc{\xx_{c}}
\def\tc{t_{c}}

\def\alp{\alpha}
\def\alpn{\rho}
\def\gmu{\mathfrak{a}}

\def\losst{\varrho}
\def\loss{\wp}
\def\lossp{u}
\def\closs{g}

\def\riskt{\cc{R}}
\def\emprisk{\ell}
\def\bias{b}
\def\bern{q}

\def\TT{\nsize}

\def\Pone{P}
\def\Pf{\P_{f(\cdot)}}
\def\Ef{\E_{f(\cdot)}}
\def\Ps{\P_{\thetas}}
\def\Es{\E_{\thetas}}
\def\Pu{\P_{\upsilons}}
\def\Eu{\E_{\upsilons}}

\def\Pvs{\P_{\thetavs}}
\def\Evs{\E_{\thetavs}}

\def\UPd{w}
\def\nunup{\nu_{1}}
\def\rru{\rr_{1}}
\def\rups{\rr_{0}}
\def\rupsb{\rups^{*}}
\def\rrf{\rr^{\flat}}

\def\smooths{\mathbb{S}}
\def\smooth{\smooths_{1}}

\def\elli{\bar{\ell}}

\def\K{K}

\def\Psir{\breve{\Psi}}

\def\af{a}
\def\afs{\af^{*}}

\def\kapla{\varkappa}

\newcommand{\mlew}[1]{\tilde{\thetav}_{#1}}
\newcommand{\mlea}[1]{\hat{\thetav}_{#1}}
\newcommand{\mluw}[1]{\tilde{\theta}_{#1}}
\newcommand{\mlua}[1]{\hat{\theta}_{#1}}
\newcommand{\penm}[1]{\boldsymbol{m}_{#1}}

\def\Pdom{\mu_{0}}
\def\PDOM{\bb{\mu}_{0}}
\def\EDOM{\E_{0}}

\def\mk{m}
\def\Mk{\cc{M}}
\def\SV{\cc{S}}

\def\Cs{E}
\def\Csd{\Cs^{\circ}}
\def\Ca{A}
\def\CS{\cc{E}}
\def\CA{\cc{A}}
\def\CAb{\CA_{\rd}}
\def\CAC{\CA_{\CoFu}}

\def\Ccb{m_{\rdb}}
\def\Ccm{m_{\rdm}}
\def\CcbGP{m_{\rdb,\GP}}
\def\CcmGP{m_{\rdm,\GP}}

\def\etas{\eta^{*}}

\def\omegav{\bb{\phi}}
\def\omegavs{\omegav^{*}}
\def\omegavc{\omegav'}

\def\nuvs{\nuv^{*}}
\def\nuvc{\nuv'}

\def\nunu{\nu_{0}}
\def\numu{\nu_{1}}
\def\nupi{\nu^{+}}
\def\nubu{\beta}

\def\nus{\nu}
\def\nusb{\nus}
\def\nusr{\nus^{\bracketing}}
\def\Nusb{\mathbb{N}}
\def\Nusr{\mathbb{N}^{\diamond}}

\def\dist{d}
\def\distd{\mathfrak{a}}

\def\hatk{\kappa}
\def\ko{k^{\circ}}

\def\qqq{\mathfrak{q}}
\def\ppp{{s}}
\def\Cqq{C(\qqq)}
\def\Cqqb{C^{\diamond}(\qqq)}
\def\Crho{C(\mrho)}
\def\Cqqm{\log(4)}
\def\Cqpr{(\qqq \rrp + \dimp / 2)}

\def\Cdima{\mathfrak{e}_{0}}
\def\Cdimb{\mathfrak{e}_{1}}
\def\cdima{\mathfrak{c}_{0}}
\def\cdimb{\mathfrak{c}_{1}}
\def\cdim{\mathfrak{c}}

\def\rdomega{\varrho}
\def\deltaD{\delta}
\def\alphai{\alpha_{1}}
\def\alphaii{\alpha_{2}}
\def\alphaiii{\alpha_{3}}
\def\alphaiv{\alpha_{4}}

\def\err{\diamondsuit}
\def\errbm{\bar{\err}_{\rdomega}}
\def\errm{\err_{\rdm}}
\def\errb{\err_{\rdb}}

\def\errbGP{\err_{\rdomega,\GP}}
\def\errmGP{\err_{\rdm,\GP}}
\def\errbmGP{\bar{\err}_{\rd,\GP}}

\def\errs{\err_{\rdomega}^{*}}
\def\deltas{\alpha}

\def\xivbGP{\xiv_{\rdb,\GP}}
\def\xivmGP{\xiv_{\rdm,\GP}}

\def\SP{S}
\def\GP{G}
\def\GPt{\GP_{0}}
\def\GPn{\GP_{1}}
\def\gp{g}
\def\gs{s}

\def\errbGP{\err_{\rdb,\GP}}
\def\errmGP{\err_{\rdm,\GP}}
\def\errpmGP{\err_{\GP}^{\pm}}

\def\LCS{\cc{C}}

\def\DPGP{\DP_{\GP}}
\def\thetavsGP{\thetavs_{\GP}}

\def\LL{\cc{L}}
\def\LLb{\LL^{*}}
\def\LLh{\cc{L}}

\def\YY{\cc{Y}}
\def\LP{L^{\circ}}

\def\modcnrd{\mathfrak{A}}

\def\pens{\pi}
\def\pnn{\mathfrak{g}}
\def\pnnd{\mathfrak{u}}
\def\pnndGP{\pnnd_{\GP}}

\def\confpr{\mathfrak{c}}
\def\confprb{\confpr^{*}}

\def\pn{\pens^{*}}
\def\penInt{\mathfrak{D}}
\def\penH{\mathbb{H}}
\def\pmu{\mathfrak{u}}
\def\Closs{\cc{R}}

\def\dimp{p}
\def\riskb{\riskt_{\rdb}}
\def\dimpp{\dimp+1}
\def\BB{I\!\!B}
\def\vA{\mathtt{v}}

\def\deficiency{\Delta}
\def\spread{\Delta}
\def\dimtotal{\dimp^{*}}

\def\thetav{\bb{\theta}}
\def\thetavs{\thetav^{*}}
\def\thetavc{\thetav'}
\def\thetavd{\thetav^{\circ}}
\def\thetavdc{\thetav^{\sharp}}
\def\dthetavs{\thetav,\thetavs}

\def\thetas{\theta^{*}}
\def\thetac{\theta'}
\def\thetad{\theta^{\circ}}
\def\thetab{\theta^{\dag}}
\def\thetavb{\thetav^{\dag}}

\def\vtheta{\vartheta}
\def\vthetav{\bb{\vtheta}}
\def\prior{\Pi}

\def\Gam{\Xi}
\def\Gam{\mathcal{S}}
\def\RG{R}
\def\Psu{\Upsilon}
\def\Phim{\breve{\Phi}}

\def\Proj{P}

\def\gammavs{\gammav^{*}}
\def\gammavd{\gammav^{\circ}}
\def\etavs{\etav^{*}}
\def\etavd{\etav^{\circ}}
\def\etavc{\etav'}

\def\taus{\tau_{0}}
\def\taup{\lceil \tau \rceil}

\def\sigmas{{\sigma^{*}}}
\def\Sigmas{\Sigma_{0}}

\def\upsilonc{\upsilon'}
\def\upsilond{\upsilon^{\circ}}
\def\upsilonp{{\upsilon}^{*}}
\def\upsilonm{{\upsilon}_{*}}
\def\upsilonvs{\upsilonv^{*}}
\def\upsilons{\upsilon^{*}}
\def\upsilonb{\bar{\upsilon}}
\def\upsilonvd{\upsilonv^{\circ}}

\def\ups{\bb{\upsilon}}
\def\upss{\ups_{0}}
\def\upsc{\ups^{\prime}}
\def\upsd{\ups^{\circ}}
\def\upsdc{\ups^{\sharp}}
\def\upsdu{\ups^{\flat}}

\def\Ups{\varUpsilon}
\def\Upsd{\Ups^{\circ}}
\def\Upss{\Ups_{\circ}}
\def\UpsP{\Ups^{c}}

\def\Thetas{\Theta_{0}}
\def\ThetasGP{\Theta_{0,\GP}}
\def\varthetav{\bb{\vartheta}}

\def\glink{g}

\def\fvs{\fv}
\def\fs{f}
\def\fb{\fv^{\dag}}

\def\uc{\uv'}
\def\ud{\uv^{\circ}}
\def\uvs{\uv^{*}}
\def\us{u^{*}}
\def\vs{v^{*}}

\def\reps{\epsilon}
\def\eps{\epsilon}

\def\repsc{\reps_{0}}
\def\repsb{\reps^{*}}
\def\repsg{g}

\def\lu{\delta}
\def\lub{\bar{\lu}}

\def\Uu{U}
\def\UU{\cc{Y}}
\def\UUM{\cc{M}}
\def\UP{\cc{U}}
\def\up{\mathfrak{u}}

\def\VP{V}
\def\VPc{\VP_{0}}
\def\VPV{\cc{U}}
\def\VPVc{\cc{\VPV}_{0}}
\def\VPGP{\VP_{\GP}}
\def\VPSP{\VP_{\SP}}

\def\VV{H}
\def\GV{\cc{G}}
\def\GVS{S}

\def\VVb{\VV^{*}}
\def\VVc{\VV_{0}}
\def\vv{\bb{h}}
\def\vva{g}
\def\vp{\mathbf{v}}
\def\vpc{\vp_{0}}
\def\VVca{\VV}
\def\Vtt{H}

\def\DG{D}

\def\Vn{V_{0}}
\def\vn{v_{0}}

\def\norm{\mathfrak{c}}
\def\normc{\delta}
\def\norma{c}

\def\egridd{\cc{E}_{\delta}}
\def\penb{\varkappa}

\def\dotzeta{\dot{\zeta}}
\def\mes{\pi}
\def\mesl{\Lambda}
\def\cprr{F}

\def\lambdam{\gm_{1}}
\def\lambdaB{{\lambda}^{*}}
\def\lambdac{{\lambda'}}

\def\cla{{b}}
\def\fis{\mathfrak{a}}
\def\fiss{\fis_{1}}

\def\Vd{{V}}
\def\vd{\bar{v}}

\def\klim{k^{\circ}}
\def\midm{\mid \!}

\def\Ldrift{M}
\def\ldrift{m}
\def\mY{b}
\def\Lvar{D}
\def\lvar{\sigma}

\def\Mubcu{\Upsilon}
\def\Dthetav{\bb{u}}

\def\B{\cc{B}}
\def\BD{\B^{\circ}}
\def\BU{B}
\def\BI{\B^{*}}

\def\mub{\mu^{*}}
\def\mubc{\mu}
\def\mubcb{\mubc^{*}}
\def\Mubc{\mathbb{M}}
\def\Mubcb{\mathrm{M}}

\def\zzc{\zz_{c}}
\def\ww{w}
\def\wwc{\ww_{c}}

\def\norms{\circ} %{\vartriangle}
\def\rs{\rr_{\norms}}
\def\yys{\yy_{\norms}}
\def\xxs{\xx_{\norms}}
\def\zzs{\zz_{\norms}}
\def\uu{\mathtt{u}}
\def\uus{\uu_{\norms}}
\def\mus{\mu_{\norms}}
\def\gms{\gm_{\norms}}
\def\wws{\ww_{\circ}}

\def\srho{s}
\def\mrho{\varrho}

\def\Lmgf{\mathfrak{M}}
\def\Lmgfb{\Lmgf^{*}}

\def\lmgf{\mathfrak{m}}
\def\lmgfb{\lmgf^{*}}

\def\Expzeta{\mathfrak{N}}
\def\expzeta{\mathfrak{s}}

\def\rr{\mathtt{r}}
\def\rrb{\rr^{*}}
\def\rru{\rr_{\circ}}
\def\rrc{\rr'}
\def\rs{r_{*}}

\def\zz{\mathfrak{z}}
\def\zzb{\tilde{\zz}}
\def\tt{\mathfrak{t}}
\def\zb{z_{\rd}}
\def\zzg{\zz_{1}}
\def\zzQ{\zz_{0}}
\def\zzq{\zz}

\def\Cr{\mathfrak{c}}
\def\Crp{\mathfrak{C}}
\def\Crl{\mathfrak{r}}
\def\Crlp{\mathfrak{R}}
\def\Crlq{\cc{T}}
\def\Crlmu{\cc{M}}

%%%%%%%%%%%%   semipar   %%%%%%%%%%%%%%%%%%%%%%%%
\def\zetah{\zeta_{h}}
\def\GG{G}
\def\HH{H}
\def\pG{p}
\def\pH{q}
\def\hh{H^{*}}

\def\mubch{\mubc_{1}}
\def\rhoh{\rho_{1}}
\def\CoFuh{\CoFu_{1}}
\def\dimh{p_{1}}
\def\VPh{\VP_{1}}
\def\VPt{\VP_{0}}

\def\LLh{L_{1}}
\def\pnndh{\pnnd_{1}}

\def\LCS{C}
\def\Ac{A_{0}}
\def\Ab{A_{\rd}}
\def\DPrb{\DPr_{\rdb}}
\def\DPrm{\DPr_{\rdm}}
\def\Cb{\cc{C}_{\rdb}}
\def\Ub{\cc{U}_{\rdb}}
\def\zetavrb{\zetavr_{\rd}}
\def\xivrb{\breve{\xiv}_{\rd}}
\def\VPrb{\breve{\VP}_{\rdb}}
\def\Larb{\breve{\La}_{\rdb}}
\def\Larm{\breve{\La}_{\rdm}}

\def\deltav{\bb{\delta}}

\def\score{\nabla}
\def\scorer{\breve{\nabla}}

\def\LCS{C}
\def\Ac{A_{0}}
\def\Bc{B_{0}}
\def\AF{A}
\def\Ab{A_{\rdb}}
\def\Am{A_{\rdm}}
\def\DPrc{\DPr_{0}}
\def\DPrb{\DPr_{\rdb}}
\def\DPrm{\DPr_{\rdm}}
\def\Cb{\cc{C}_{\rdb}}
\def\Cm{\cc{C}_{\rdm}}
\def\Ub{\cc{U}_{\rdb}}
\def\deltav{\bb{\delta}}
\def\nuv{\bb{\nu}}
\def\xivrb{\breve{\xiv}_{\rd}}
\def\VPrb{\breve{\VP}_{\rdb}}
\def\Larb{\breve{\La}_{\rdb}}
\def\Lar{\breve{\La}}
\def\Larm{\breve{\La}_{\rdm}}
\def\VH{Q}
\def\VHc{\VH_{0}}
\def\zetavrm{\zetavr_{\rdm}}
\def\N{\mathbb{N}}

\def\Span{\operatorname{span}}
\def\Exc{{\square}}
\def\UUs{U_{\circ}}
\def\errbm{\errb^{*}}
\def\corrDF{\nu}
\def\BBr{\breve{\BB}}
\def\taua{\tau}
\def\AssId{\mathcal{I}}
\def\assId{\iota}
\def\AFD{\cc{A}}

\def\BanX{\cc{X}}
\def\basX{\ev}
\def\apprX{\alpha}
\def\fvs{\fv^{*}}
\def\lkh{\ell}
\def\Bc{B_{0}}
\def\dimn{\dimp_{\nsize}}
\def\betan{\beta_{\nsize}}

%%%%%%%%%%%   BvM   %%%%%%%%%%%%%%%%%%%%%%%%%%%%%%

\def\xivGP{\xiv_{\GP}}
\def\dimA{\mathtt{p}}
\def\dimAGP{\dimA}
\def\dime{\dimA_{e}}
\def\dimG{\dimA_{\GP}}
\def\dimS{\dimA_{s}}
\def\nubm{\nu_{\rd}}
\def\uub{u_{\rd}}
\def\uubGP{u_{\rd,\GP}}

\def\priorden{\pi}
\def\xivGP{\xiv_{\GP}}
\def\dimAGP{\dimA}
\def\nubm{\nu_{\rd}}
\def\uub{u_{\rd}}
\def\uubGP{u_{\rd,\GP}}

\def\CR{\mathcal{C}}
\def\CRb{\CR_{\rdb}}
\def\vthetavb{\bar{\vthetav}}
\def\Covpost{\mathfrak{S}}

\def\Db{\DP_{+}}
\def\Dm{\DP_{-}}
\def\uvb{\uv_{+}}
\def\uvm{\uv_{-}}
\def\uud{\omega}
\def\taub{\delta}
\def\Lip{L}
\def\Xb{X_{+}}
\def\Xm{X_{-}}
\def\deltam{\delta_{-}}
\def\betauv{\delta}
\def\betab{\betauv_{1}}
\def\betaf{\betauv_{2}}
\def\upsv{\bb{\varkappa}}
\def\upsvb{\bar{\upsv}}
\def\rhob{\varrho}
\def\alpb{\alp_{1}}
\def\betap{\betauv_{3}}
\def\Ec{\E^{\circ}}
\def\ff{f}
\def\fpos{g}
\def\fneg{h}
\def\alpb{\alp_{+}}
\def\alpm{\alp_{-}}

%%%%%%%%%%%   sms   %%%%%%%%%%%%%%%%%%%%%%%%%%%%%%
\def\kappak{\kappa}
\def\kappas{\kappak^{*}}
\def\Kappak{\cc{K}}
\def\DPk{\DP_{\kappak}}
\def\VPk{\VP_{\kappak}}

%%%%%%%%%%%%   sp   %%%%%%%%%%%%%%%%%%%%%%%%%%%%%
\def\ts{s}
\def\tsv{\bb{\ts}}
\def\mm{\kappa}
\def\mmc{\mm'}
\def\mmd{\mm^{\circ}}
\def\mmo{\mm^{*}}
\def\mmmmo{\mm,\mmo}
\def\mmt{\tilde{\mm}}
\def\mma{\hat{\mm}}
\def\pp{z}

\def\LLL{L_{1}}
\def\LLr{L_{0}}
\def\muL{\mu_{1}}
\def\mur{\mu_{0}}

\def\LmgfL{\Lmgf_{1}}
\def\Lmgfr{\Lmgf_{0}}
\def\Lmgfm{\Lmgf_{1}}

\def\Kappa{\cc{K}}
\def\CoFu{\cc{C}}
\def\CoFuc{\CoFu_{0}}
\def\CoFub{\CoFu^{*}}
\def\CoFuL{\CoFu_{1}}
\def\CoFur{\CoFu_{0}}
\def\CAL{\CA_{1}}
\def\CAr{\CA_{0}}
\def\CAzz{\cc{A}}

\def\pnnL{\pnn_{1}}
\def\pnnr{\pnn_{0}}
\def\ttd{\delta}
\def\alphaL{\alpha_{1}}
\def\alphar{\alpha_{0}}
\def\alpharL{\alpha}
\def\rat{\mathfrak{t}}
\def\mquad{\nquad}
\def\zzL{\zz_{1}}
\def\zzr{\zz_{0}}

\def\mmset{\mathcal{I}}
\def\xex{u}
\def\dcm{q}
\def\dc{g}
\def\dcL{\dc_{1}}
\def\dcr{\dc_{0}}
\def\kk{k}

\def\cpen{\tau}

%=================  density  ==============
\def\dens{f}
\def\jj{j}
\def\JJ{\cc{J}}
\def\Zphi{Z}
\def\Zphiv{\bb{\Zphi}}

%================= LES =====================
\def\nuu{\mathfrak{u}}
\def\nud{\mathfrak{u}_{0}}
\def\nun{c_{\nuu}}
\def\rhork{\kullb}
\def\GH{\mbox{GH}}
\def\HYP{\mbox{HYP}}
\def\NIG{\mbox{NIG}}
\def\IR{{\rm I\!R}}
\def\taggr{b}
\def\penm{\boldsymbol{m}}
\def\Crlp{\cc{R}}

%================== qfu/cmr ====================
\def\Mh{M}
\def\Mht{\Mh^{c}}

\def\Mhh{\Mh^{-}}
\def\Mhc{G}
\def\Lh{L_{1}}
\def\Uh{\cc{U}}
\def\wloc{w}
\def\Bias{B}
\def\bias{b}
\def\ExpzetaU{\Expzeta_{1}}
\def\vpci{\vp_{i,0}}
\def\IFci{\IF_{i,0}}

\def\erqb{\Circle_{\rdb}}
\def\erqm{\Circle_{\rdm}}
\def\errqm{\errm^{*}}
\def\errqb{\errb^{*}}
\def\Nsize{N}
\def\VVD{\VV_{1}}
\def\AA{A}
\def\Wloc{W}

\renewcommand{\(}{$\,}
\renewcommand{\)}{\,$}

\def\nquad{\hspace{-1cm}}
\def\eqdef{\stackrel{\operatorname{def}}{=}}
\def\tod{\stackrel{d}{\longrightarrow}}
\def\tow{\stackrel{w}{\longrightarrow}}
\def\toP{\stackrel{\P}{\longrightarrow}}

\newcommand{\cc}[1]{\mathscr{#1}}
\newcommand{\bb}[1]{\boldsymbol{#1}}

\renewcommand{\bar}[1]{\overline{#1}}
\renewcommand{\hat}[1]{\widehat{#1}}
\renewcommand{\tilde}[1]{\widetilde{#1}}

\renewcommand{\Gamma}{\varGamma}
\renewcommand{\Pi}{\varPi}
\renewcommand{\Sigma}{\varSigma}
\renewcommand{\Delta}{\varDelta}
\renewcommand{\Lambda}{\varLambda}
\renewcommand{\Psi}{\varPsi}
\renewcommand{\Phi}{\varPhi}
\renewcommand{\Theta}{\varTheta}
\renewcommand{\Omega}{\varOmega}
\renewcommand{\Xi}{\varXi}
\renewcommand{\Upsilon}{\varUpsilon}
\def\nn{\nonumber \\}

\def\suml{\sum\limits}
\def\supl{\sup\limits}
\def\maxl{\max\limits}
\def\infl{\inf\limits}
\def\intl{\int\limits}
\def\liml{\lim\limits}
\def\Cov{\operatorname{Cov}}
\def\Var{\operatorname{Var}}
\def\arginf{\operatornamewithlimits{arginf}}
\def\argsup{\operatornamewithlimits{argsup}}
\def\argmax{\operatornamewithlimits{argmax}}
\def\argmin{\operatornamewithlimits{argmin}}
\def\val{\operatorname{val}}

\def\D{\boldsymbol{D}}
\def\dd{\operatorname{d}}
\def\tr{\operatorname{tr}}
\def\I{I\!\!I}
\def\R{I\!\!R}
\def\E{I\!\!E}
\def\P{I\!\!P}
\def\X{\mathfrak{X}}
\def\kappa{\varkappa}
\def\Const{\mathrm{Const.} \,}
\def\cdt{\boldsymbol{\cdot}}
\def\tm{\!\times\!}
\def\T{\top}
\def\diag{\operatorname{diag}}
\def\diam{\operatorname{diam}}
\def\rank{\operatorname{rank}}
\def\loc{\operatorname{loc}}

\def\av{\bb{a}}
\def\bv{\bb{b}}
\def\cv{\bb{c}}
\def\dv{\bb{d}}
\def\ev{\bb{e}}
\def\fv{\bb{f}}
\def\gv{\bb{g}}
\def\hv{\bb{h}}
\def\iv{\bb{i}}
\def\jv{\bb{j}}
\def\kv{\bb{k}}
\def\lv{\bb{l}}
\def\mv{\bb{m}}
\def\nv{\bb{n}}
\def\ov{\bb{o}}
\def\pv{\bb{p}}
\def\qv{\bb{q}}
\def\rv{\bb{r}}
\def\sv{\bb{s}}
\def\tv{\bb{t}}
\def\uv{\bb{u}}
\def\vv{\bb{v}}
\def\wv{\bb{w}}
\def\xv{\bb{x}}
\def\yv{\bb{y}}
\def\zv{\bb{z}}

\def\Cv{\bb{C}}
\def\Gv{\bb{G}}
\def\Mv{\bb{M}}
\def\Sv{\bb{S}}
\def\Uv{\bb{U}}
\def\Xv{\bb{X}}
\def\Yv{\bb{Y}}
\def\Zv{\bb{Z}}

\def\alphav{\bb{\alpha}}
\def\epsv{\bb{\varepsilon}}
\def\etav{\bb{\eta}}
\def\gammav{\bb{\gamma}}
\def\varepsilonv{\bb{\varepsilon}}
\def\phiv{\bb{\phi}}
\def\psiv{\bb{\psi}}
\def\tauv{\bb{\tau}}
\def\upsilonv{\bb{\upsilon}}
\def\xiv{\bb{\xi}}
\def\zetav{\bb{\zeta}}

\def\Psiv{\bb{\Psi}}
\def\CONST{\mathtt{C}}

\def\itemv{\vfill\item}
\newenvironment{myslide}[1]
    {\begin{frame}\frametitle{#1}\vfill}
    {\vfill\end{frame}}

\def\vsp{\vspace{0.05\textheight} \vfill}
\def\summarysign{\resizebox{0.08\textwidth}{0.08\textheight}{\includegraphics{summary}}\,}
\def\nix{}
\def\wpu{$\bullet$}

\def\btri{\vfill{\( \blacktriangleright \) }}
\def\btrir{\vfill{\( \blacktriangleright \) }}

\newcommand{\mygraphics}[3]{\begin{center}
    \resizebox{#1\textwidth}{#2\textheight}{\includegraphics{#3}}
    \end{center}
}

\newcommand{\mybox}[3]{\begin{center}
    \resizebox{#1\textwidth}{#2\textheight}{#3}
    \end{center}
}

%\definecolor{myhcolor}{rgb}{0.2,0,0.8}
%\definecolor{myhcolor}{named}{red}
\newenvironment{eqnh}
{
    %\color{myhcolor}} {}
    \setbeamercolor{postit}{fg=black,bg=hellgelb} %{fg=myhcolor,bg=white}
    \begin{beamercolorbox}[center,wd=\textwidth]{postit} %rounded=true,shadow=true,
    \begin{eqnarray*}}
    {\end{eqnarray*}\end{beamercolorbox}
}

\def\gps{s}

\def\gps{s}
\def\GK{\cc{G}}
\def\Excgr{\diamondsuit}

\def\dimh{m}
\def\LCS{C}
\def\Bc{B_{0}}
\def\AF{A}
\def\CF{C}
\def\Ab{A_{\rdb}}
\def\Am{A_{\rdm}}
\def\DPrp{\DPr_{\dimh}}
\def\DPrb{\DPr_{\rdb}}
\def\DPrm{\DPr_{\rdm}}
\def\Cb{\cc{C}_{\rdb}}
\def\Cm{\cc{C}_{\rdm}}
\def\Ub{\cc{U}_{\rdb}}
\def\xivrb{\breve{\xiv}_{\rd}}
\def\VPrb{\breve{\VP}_{\rdb}}
\def\Larb{\breve{\La}_{\rdb}}
\def\Lar{\breve{\La}}
\def\Larm{\breve{\La}_{\rdm}}
\def\VH{Q}
\def\VHc{\VH_{0}}
\def\kappavrm{\kappavr_{\rdm}}

\def \Xv{\bb{X}}

\def\fvh{\bb{\dimh}}
\def\N{\mathbb{N}}
\def\Z{\mathbb{Z}}

\def\iic{\IF}
\def\iif{\breve{\iic}}
\def\DP{\text{D}}
\def\HH{\text{H}}
\def\A{\text{A}}
\def\ifc{\breve{\iic}}

\def\HF{\mathcal H}

\def\Thetathetav#1{\substack{\\[0.1pt] \upsilonv\in\Theta \\[1pt] \Proj \upsilonv = #1}}
\def\Span{\operatorname{span}}
\def\Exc{{\square}}
\def\UUs{U_{\circ}}
\def\errbm{\errb^{*}}
\def\corrDF{\rho}
\def\BBr{\breve{\BB}}
\def\taua{\tau}
\def\AssId{\mathcal{I}}
\def\AFD{\cc{A}}

\def\BanX{\cc{X}}
\def\basX{\ev}
\def\apprX{\alpha}
\def\fvs{\fv^{*}}
\def\lkh{\ell}
\def\Bc{B_{0}}
\def\h{\frac{1}{2}}
\def\basis{\ev}
\def\Proj{\Pi_{0}}
\def\Projh{\Pi_{\dimtotal}}

\def\Ij{\mathcal{I}}

\def\Mn{M_{\nsize}}
\def\bA{\breve{A}}
\def\cA{\bA_{\dimh}}

\def\Sdr{\cc{S}}
\def\xxn{\xx_{\nsize}}

\def\CONST{\mathtt{C}}
\def\Ij{\mathcal{I}}
\def\etas{\eta^{*}}
\def\kappavs{\kappav^{*}}
\def\kappavc{\kappav'}

\def\kappav{\boldsymbol{\kappa}}
\def\kappavs{\kappav^{*}}

\def\omegav{\bb{\phi}}
\def\omegavs{\omegav^{*}}
\def\omegavc{\omegav'}

\def\dimn{\dimp_{\nsize}}
\def\betan{\beta_{\nsize}}

\def\bA{\breve{A}}
\def\cA{\bA_{\dimh}}

\def\corrDF{\rho}
\def\upsilonv{\boldsymbol{\upsilon}}
\def\upsilonvs{\boldsymbol{\upsilon}^*}
\def\upsilonvd{\boldsymbol{\upsilon}^\circ}

\section{Introduction}
This paper presents a way to control the bias in a sieve profile contrast estimation problem, which we elaborate for simplicity for parameters in \(l^2\eqdef \{(x_1,x_2,\ldots)\subset \R:\, \sum_{k=1}^\infty x^2<\infty\}\). More precisely consider a contrast functional \(\E\LL:\R^{\dimp}\times l^2\to \R\). Assume that the goal is to calculate the target parameter \( \thetavs \) defined as
\begin{EQA}[c]
    \thetavs
    \eqdef
    \argmax_{\thetav} \sup_{\etav: (\thetav,\etav) \in \Ups \subset l^2} \E \LL(\thetav,\etav) =\Pi_{\thetav}\upsilonvs\eqdef\Pi_{\thetav} \argmax_{\upsilonv=(\thetav,\etav) \in \Ups\subset  l^2} \E \LL(\upsilonv),
\label{thetavsBanX}
\end{EQA}
with a set \(\Ups\subset \R^{\dimp}\times l^2\). 
To circumvent the problem of maximizing over an infinite dimensional set \(\Ups\subset\R^{\dimp}\times  l^2\) define for some \(\dimh\in\N\) the following approximation contrast functional
\begin{EQA}
\E\LL_{\dimh}:\R^{\dimp+\dimh}&\to& \R,\\
(\thetav,\etav)&\mapsto& \E\LL(\thetav,E_{l^2}\etav),
\end{EQA}
where \(E_{l^2}: \R^{\dimh} \to l^2\) is the natural embedding operator. Further define the biased target
\begin{EQA}[c]
    \thetavs_{\dimh}
    \eqdef
    \argmax_{\thetav} \sup_{\etav \in \R^{\dimh}} \E \LL_{\dimh}(\thetav,\etav)
    =\Pi_{\thetav}\upsilonvs_{\dimh} \eqdef\Pi_{\thetav} \argmax_{\upsilonv=(\thetav,\etav) \in \R^{\dimtotal}} \E \LL_{\dimh}(\thetav,\etav).
\label{thetavsBanX}
\end{EQA}
We are interested in a bound for the euclidean distance \(\|\thetavs-\thetavs_{\dimh}\|\).
Further consider 
%\( \DP^{2} \), \( \HH^{2} \), and \( \A \) be 
the following block representations of the hessian operator
\( \DF^{2}(\upsilonv) = - \nabla^{2} \E \LL(\thetav,\fv) \):
\begin{EQA}
    \label{eq: introduction of operator representations upsilonv}
    \DF^{2}&=&
     \left(
      \begin{array}{ccc}
        \DF_{\dimh}^2 & \AF_{\dimh} \\
        \AF_{\dimh}^\T & \HF^2_{\dimh} 
      \end{array}
    \right)\in L(\R^{\dimtotal}\times l^2,\R^{\dimtotal}\times l^2),
\label{DFc012se}
\end{EQA}
where for a vectorspace \(V\) the symbol \(L(V,V)\) denotes the set of linear operators from \(V\)  to \(V\), \(\dimtotal=\dimp+\dimh\in\N\) and where
\begin{EQA}[c]
\DF^2_{\dimh}(\upsilonv)\eqdef\nabla_{\dimh}^2\E\LL(\upsilonv)\in\R^{\dimtotal\times\dimtotal},
\end{EQA}
i.e. the derivatives of \(\E\LL\) are only taken with respect to the first \(\dimp+\dimh\in\N\) coordinates of \(\upsilonv=(\thetav,\etav)\in \R^{\dimp}\times l^2\)

Define the following two matrices
\begin{EQA}
\DPr^{2}_{\dimh}(\upsilonv)\eqdef\left(\Pi_{\thetav}\DF_{\dimh}^{-2}(\upsilonv)\Pi_{\thetav}^\T\right)^{-1}\in\R^{\dimp\times\dimp},
&\text{ 
and }&
\DPr^{2}(\upsilonv)\eqdef \left(\Pi_{\thetav}\DF^{-2}(\upsilonv)\Pi_{\thetav}^\T\right)^{-1}\in\R^{\dimp\times\dimp}.
\end{EQA}
The second result we want to derive is a bound for the difference between \(\DPrp(\upsilonvs_{\dimh})\in\R^{\dimp\times\dimp}\) and \(\DPr(\upsilonvs)\in\R^{\dimp\times\dimp}\) in spectral norm, where 
\begin{EQA}[c]
\upsilonvs_{\dimh}\eqdef \argmax_{\upsilonv\in\R^{\dimp+\dimh}}\E\LL.
\end{EQA}

This kind of problem arises for instance when a sieve profile estimator is analyzed as in \cite{AASP2013}. Given a (random) contrast functional \(\LL:\R^{\dimp}\times l^2\to \R\) one defines \(\LL_{\dimh}\) analogously to \(\E\LL_{\dimh}\) above and the sieve profile estimator
\begin{EQA}[c]
\label{ttdhttdhs}
\tilde \thetav_{\dimh}\eqdef \argmax_{\thetav\in\R^{\dimp}}\max_{\etav\in\R^{\dimh}}\LL_{\dimh}(\thetav,\etav)\eqdef \Pi_{\thetav}\upsilonvs_{\dimh}\eqdef \argmax_{\upsilonv\in\R^{\dimp+\dimh}}\E\LL.
\end{EQA}
The parametric results obtained in \cite{AASP2013} claim that the profile estimator
\( \tilde{\thetav}_{\dimh} \) estimates well \( \thetavs_{\dimh} \) if the spread \(\Excgr(\rr,\xx)>0\) is small. More precisely we have for fixed \(\xx \) with Theorem 2.1 of \cite{AASP2013} applied to 
\( \tilde{\thetav}_{\dimh} \) from \eqref{ttdhttdhs} that with probability greater \(1-22.8 \ex^{-\xx}\)
\begin{EQA}[c]
\label{eq: convergence of estimator for dimh}
    \|\DPrp \bigl( \tilde{\thetav}_{\dimh} - \thetavs_{\dimh} \bigr)
    - \xivr_{\dimh}(\upsilonvs_{\dimh})\|\le \Excgr\big(\rr,\xx\big),
\label{DPrctthsh}
\end{EQA} 
where \( \upsilonvs_{\dimh} = (\thetavs_{\dimh},\etavs_{\dimh}) 
= \argmax_{\upsilonv} \E \LL_{\dimh}(\upsilonv) \).

This result involves exactly the two kinds of bias from above, i.e. one that concerns the difference \(\thetavs_{\dimh} - \thetavs\) and the other the difference between \(\DPrp\in\R^{\dimp\times\dimp}\) and \(\DPr\in\R^{\dimp\times\dimp}\). 

\cite{AASP2013} use two assumptions to address this bias. The first one reads:

\begin{description}
\item[\(\bb{(bias)}\)] There exists a  decreasing function \(\alpha:\N\to \R_+\) such that
\begin{EQA}[c]
\|\DPrp(\thetavs_{\dimh} - \thetavs) \|\le \hat\alpha(\dimh).
\end{EQA}
\end{description}

The second one:

\begin{description}
\item[\(\bb{(bias')}\)] As \(\dimh\to \infty\)
\begin{EQA}
\| I-\DPrp(\upsilonvs)^{-1}\DPr(\upsilonvs)^2\DPrp(\upsilonvs)^{-1}\|&=& o(1),\\
\| I-\DPrp(\upsilonvs_{\dimh})^{-1}\DPrp(\upsilonvs)^2\DPrp(\upsilonvs_{\dimh})^{-1}\|&=& o(1).%\\
% \Cov\left((\frac{1}{\sqrt n}\DPr_{\dimh})^{-1}(\score_{\thetav} \lkh_{1}(\upsilonvs_{\dimh}) - \A_{\dimh}\HH_{\dimh}^{-2}\score_{\etav} \lkh_{1}(\upsilonvs_{\dimh}))\right)&\to& 1.
 \end{EQA}
\end{description}
In this paper we want to present a particular way to obtain such a function \(\hat\alpha(\dimh)\) and to derive \(\bb{(bias')}\), which relies on less high level conditions on the smoothness and structure of \(\E\LL\).

\section{Main result}
Denote by \(\Pi_{\dimtotal}:l^2\to \R^{\dimtotal}\) the projection to the first \(\dimtotal\in\N\) coordinates of an element of \(l^2\).
To bound the bias \(\|\DPrp(\thetavs_{\dimh} - \thetavs) \|>0\) we present the following condition:

\begin{description}
  \item[\( \bb{(\kappav)} \)] 
  The vector \( \kappavs\eqdef (Id_{l^2}-\Pi_{\dimtotal})\upsilonvs\) satisfies \(\|\HF_{\dimh}\kappavs\|^2\le \CONST_{\kappavs}\dimh\) for some \(\CONST_{\kappavs}>0\) and with \( \alpha(\dimh)\to 0 \)
\begin{EQA}
    \|\DF_{\dimh}^{-1} \AF_{\dimh} \kappavs\|
    &\le& 
    \alpha(\dimh). 
\label{apprXdimhledimh}
\end{EQA} 
Further for any \(\lambda\in[0,1]\) with some \(\tau(\dimh)\to 0\)
\begin{EQA}
\label{eq: condition on smoothness of DF}
    \|\DF_{\dimh}^{-1} \left(\nabla_{\upsilonv\kappav}\E\LL(\upsilonvs,\lambda\kappavs) -\AF_{\dimh}\right) \kappavs\|
    &\le& 
    \tau(\dimh),\\
\left|{\kappavs}^\T(\DF_{\kappav\kappav}-\nabla_{\kappav\kappav}\E\LL(\upsilonvs,\lambda\kappavs))\kappavs\right| &\le& \CONST_{\kappavs}^2\dimh.
\label{eq: condition on smoothness of DF-zeta}
\label{eq: new condition for bias}
\end{EQA}     
\end{description}

To ensure that \( \DPr_{\dimh} \) is close to \( \DPr \) we impose the following second condition.

\begin{description}
    \item[\( \bb{(\upsilonv\kappav)} \)]
Assume that with some \(\beta(\dimh)\to 0\)
\begin{EQA}[c]
\label{eq: convergence of breve matrix}
\|\HF^{-1}\AF^{\T}\DF_{\dimh}^{-1}\|\le \beta(\dimh).
\end{EQA}
\end{description}

\begin{theorem}
\label{theo: bias cond}
Let the conditions \(\bb{ (\kappav)} \) and \( \bb{(\upsilonv\kappav)} \) be 
fulfilled. Further let the condition \( \bb{(\cc{L}{\rr}_{\infty})}\) from Section \ref{sec: conditions} be satisfied for \( \E\LL: l^2\to \R \) with \(\gmi(\rr)\equiv \gmi>0\). Set \( {\rr^*}^{2}= 4\CONST_{\kappavs}^2\dimh/\gmi\) and let for some \(\dimh_0\in\N\) and all \(\dimh\ge \dimh_0\) the condition \( \bb{(\LL_{0})} \) be fulfilled for \(\DFc=\DF_{\dimh}\) and for any \(\rr\le \rr^*\). Then \(\bb{(bias)}\) and \(\bb{(bias')}\) are satisfied with
 \begin{EQA}   
   \hat\alpha(\dimh)&=&  \sqrt{\frac{1+\corrDF^2}{1-\corrDF^2}}\Bigg(\alpha(\dimh) +\tau(\dimh)+2\delta(2\rr^*)\rr^*\Bigg).%\\
  % \hat\beta(\dimh)&=& \frac{1+\corrDF^2+\beta^2(\dimh)}{1-\corrDF^2}\frac{\beta^2(\dimh)}{1-\beta^2(\dimh)}.
\end{EQA}
\end{theorem}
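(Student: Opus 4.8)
The plan is to reduce everything to the two first-order optimality conditions and a single mean-value expansion along the tail direction $\kappavs$, and then to read off the bias from the block geometry of $\DF_{\dimh}^2$.

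First I would record the stationarity identities. Since $\upsilonvs$ maximizes $\E\LL$ over $l^2$ we have $\nabla\E\LL(\upsilonvs)=0$, and projecting onto the first $\dimtotal$ coordinates gives $\nabla_{\dimh}\E\LL(\upsilonvs)=0$; since $\upsilonvs_{\dimh}$ maximizes the sieve functional, $\nabla_{\dimh}\E\LL(\upsilonvs_{\dimh},0)=0$ with zero-padded tail. Subtracting and inserting the intermediate point $(\Pi_{\dimtotal}\upsilonvs,0)$ splits the difference into a horizontal part (change of the first $\dimtotal$ coordinates at fixed zero tail) and a vertical part (switching the tail from $\kappavs$ to $0$ at fixed first coordinates). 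A mean-value expansion turns the horizontal part into $-\overline{\DF}_{\dimh}^{2}(\upsilonvs_{\dimh}-\Pi_{\dimtotal}\upsilonvs)$ with $\overline{\DF}_{\dimh}^{2}$ an averaged Hessian block, while the vertical part is $\int_0^1\nabla_{\upsilonv\kappav}\E\LL(\Pi_{\dimtotal}\upsilonvs,\lambda\kappavs)\,\kappavs\,d\lambda$ (this is exactly the quantity controlled in $\bb{(\kappav)}$, reading $\upsilonvs$ there as its first $\dimtotal$ coordinates with tail $\lambda\kappavs$). Solving and applying $\Pi_{\thetav}$, which commutes with the embedding so that $\thetavs_{\dimh}-\thetavs=\Pi_{\thetav}(\upsilonvs_{\dimh}-\Pi_{\dimtotal}\upsilonvs)$, yields
\[
\DPrp(\thetavs_{\dimh}-\thetavs)=-\,\DPrp\,\Pi_{\thetav}\,\overline{\DF}_{\dimh}^{-2}\int_0^1 \nabla_{\upsilonv\kappav}\E\LL(\Pi_{\dimtotal}\upsilonvs,\lambda\kappavs)\,\kappavs\,d\lambda .
\]

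Before estimating the right-hand side I must guarantee that $\upsilonvs_{\dimh}$ and $\Pi_{\dimtotal}\upsilonvs$ both lie in the ball of radius $\rr^*$ about the base point, so that the smoothness condition $\bb{(\LL_{0})}$ (via its modulus $\delta$) and the curvature condition $\bb{(\cc{L}{\rr}_{\infty})}$ (via $\gmi$) apply. This is the step I expect to be the main obstacle, because it couples the infinite-dimensional tail to the finite sieve displacement. I would argue that optimality of $\upsilonvs_{\dimh}$ gives $\E\LL(\upsilonvs_{\dimh},0)\ge\E\LL(\Pi_{\dimtotal}\upsilonvs,0)$, so the concavity gap between $\upsilonvs$ and $(\upsilonvs_{\dimh},0)$ is dominated by the purely vertical gap $\E\LL(\upsilonvs)-\E\LL(\Pi_{\dimtotal}\upsilonvs,0)=\tfrac12\kappavs^\T\overline{\HF}^2\kappavs$; by $\|\HF_{\dimh}\kappavs\|^2\le\CONST_{\kappavs}^2\dimh$ together with the bound on $|\kappavs^\T(\DF_{\kappav\kappav}-\nabla_{\kappav\kappav}\E\LL)\kappavs|$ in $\bb{(\kappav)}$ this is at most $\CONST_{\kappavs}^2\dimh$. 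Combining with the lower curvature $\gmi$ gives $\tfrac{\gmi}{2}\|(\upsilonvs_{\dimh},0)-\upsilonvs\|^2\le\CONST_{\kappavs}^2\dimh$, i.e. the displacement is below $\rr^*/\sqrt2$ for the stated ${\rr^*}^{2}=4\CONST_{\kappavs}^2\dimh/\gmi$, the factor $4$ leaving room to run the smoothness on the doubled ball of radius $2\rr^*$. With the radius secured I would estimate the display by inserting $\DF_{\dimh}^{-1}\DF_{\dimh}$ around $\overline{\DF}_{\dimh}^{-2}$ and using the co-isometry identity $\|\DPrp\,\Pi_{\thetav}\,\DF_{\dimh}^{-1}\|=1$, which follows from $\DPrp^2=(\Pi_{\thetav}\DF_{\dimh}^{-2}\Pi_{\thetav}^\T)^{-1}$ and is precisely why $\bb{(\kappav)}$ is phrased through $\DF_{\dimh}^{-1}\AF_{\dimh}\kappavs$. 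Writing $\nabla_{\upsilonv\kappav}\E\LL=\mp\AF_{\dimh}+(\nabla_{\upsilonv\kappav}\E\LL\pm\AF_{\dimh})$ splits the integral into a leading term bounded by $\|\DF_{\dimh}^{-1}\AF_{\dimh}\kappavs\|\le\alpha(\dimh)$ and a remainder bounded by $\tau(\dimh)$, while replacing $\overline{\DF}_{\dimh}^{2}$ by the base $\DF_{\dimh}^2$ costs a smoothness error $\le 2\delta(2\rr^*)\rr^*$ through $\bb{(\LL_{0})}$ and the radius bound. The residual prefactor $\sqrt{(1+\corrDF^2)/(1-\corrDF^2)}$ then arises from moving the efficient information $\DPrp$ between its evaluation point in $\bb{(bias)}$ and the geometry of the expansion, a product of an upper factor $\sqrt{1+\corrDF^2}$ and a lower factor $1/\sqrt{1-\corrDF^2}$ coming from the target/nuisance correlation bound $\corrDF$; collecting terms reproduces $\hat\alpha(\dimh)$ and establishes $\bb{(bias)}$.

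Finally, for $\bb{(bias')}$ I would treat the two displays separately. The first compares $\DPr_{\dimh}(\upsilonvs)=\DPrp(\upsilonvs)$ with the full $\DPr(\upsilonvs)$: a Schur-complement computation rewrites $\DPr^{-2}=\Pi_{\thetav}(\DF_{\dimh}^2-\AF\HF^{-2}\AF^\T)^{-1}\Pi_{\thetav}^\T$ against $\DPr_{\dimh}^{-2}=\Pi_{\thetav}\DF_{\dimh}^{-2}\Pi_{\thetav}^\T$, so the relative gap is governed by $\|\DF_{\dimh}^{-1}\AF\HF^{-1}\|^2\le\beta(\dimh)^2$ from $\bb{(\upsilonv\kappav)}$, hence $o(1)$. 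The second display compares $\DPrp$ at the two points $\upsilonvs_{\dimh}$ and $\upsilonvs$, whose distance is at most $2\rr^*$ by the previous paragraph, so $\bb{(\LL_{0})}$ bounds it by $O(\delta(2\rr^*))$, which is $o(1)$ under the same decay $\delta(2\rr^*)\rr^*\to0$ that makes $\hat\alpha(\dimh)$ informative. This yields $\bb{(bias')}$ and completes the proof.
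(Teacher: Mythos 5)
Your proposal is correct and follows essentially the same route as the paper's own proof: the same localization step (optimality of $\upsilonvs_{\dimh}$ combined with $\bb{(\cc{L}{\rr}_{\infty})}$ to get $\|\DF(\upsilonvs_{\dimh}-\upsilonvs)\|\le\rr^*$), the same stationarity/Taylor decomposition through the intermediate point $(\Pi_{\dimtotal}\upsilonvs,0)$ producing the three terms $\alpha(\dimh)+\tau(\dimh)+2\delta(2\rr^*)\rr^*$, and the same two matrix comparisons for $\bb{(bias')}$ (Schur-complement control of the tail cross-term via $\beta(\dimh)$, and $\bb{(\LL_{0})}$ linking the two evaluation points $\upsilonvs$ and $\upsilonvs_{\dimh}$). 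The only real deviation is minor and in your favor: your co-isometry identity $\|\DPrp\Pi_{\thetav}\DF_{\dimh}^{-1}\|=1$ gives the bias bound without the factor $\sqrt{(1+\corrDF^2)/(1-\corrDF^2)}$ (which is $\ge 1$, so the stated $\hat\alpha(\dimh)$ still follows), whereas the paper's printed proof instead passes through $\|\DF_{\dimh}(\upsilonvs_{\dimh}-\upsilonvs)\|$ and invokes the identifiability condition $(\bb{\AssId})$ to produce that factor.
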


\section{Application to single-index model}
\label{sec: application to single index model}
We present an example to illustrate how these results can be derived for single-index modeling. 
Consider the following model
\begin{EQA}[c]
\label{eq: single index model introduced}
    \Yv_{i}
    =
    \fs(\Xv_{i}^{\T} \thetavs) + \varepsilon_{i}, 
    \qquad 
    i=1,...,\nsize,
\end{EQA}
or some \(\fs:\R\to \R\) and \(\thetavs\in S_{1}^{\dimp,+}\subset\R^{\dimp}\), i.i.d errors \(\varepsilon_i\in\R\) and \(\Var(\varepsilon_i)=\sigma^2\) and i.i.d random variables \(\Xv_{i}\in \R^{\dimp}\) with distribution denoted by \(\P^{\Xv}\). 

To ensure identifyability of \(\thetavs\in\R^{\dimp}\) we assume that it lies in the half sphere \(S_{1}^{\dimp,+}:=\{\thetav\in\R^{\dimp}:\, \|\thetav\|=1,\, \theta_1> 0\}\subset\R^{\dimp}\). We assume that the support of the \(\Xv_{i}\in \R^{\dimp}\) is contained in the ball of radius \(s_{\Xv}>0\). 
Further we assume that \(\fs\in \{f:[-s_{\Xv},s_{\Xv}]\mapsto \R\} \) can be well approximated by a orthonormal \(C^2\)-Daubechies-wavelet basis, i.e. for a suitable function \(\basX_{0}:=\psi:[-s_{\Xv},s_{\Xv}]\mapsto \R\) we set for \(k=2^{r_k}+j_k\)
\begin{EQA}[c]
\basX_{k}(t)=2^{k/2}\psi\left(2^{k}(t-2j_ks_{\Xv})\right),\, k\in\N.
\end{EQA}

Our aim is to analyze the properties of the profile MLE 
\begin{EQA}[c]
    \tilde{\thetav}
    \eqdef
    \argmax_{\thetav} \max_{\etav \in \R^{\dimh}} \LL(\thetav,\etav) ,
\label{ttSI}
\end{EQA}    
where
\begin{EQA}[c]
    \LL(\thetav,\etav)
    =
    - \frac{1}{2} 
    \sum_{i=1}^{\nsize} | \Yv_{i} - \sum_{k=0}^{m} \etav_{k} \basX_{k}(\Xv_{i}^{\T} \thetav) |^{2}.
\end{EQA}

Consider the following assumptions.

\begin{description} 
 \item[\((\mathbf{Cond}_{\Xv})\)]
  The measure \(\P^{\Xv}\) is absolutely continuous with respect to the Lebesgue measure. The Lebeque density \(d_{\Xv}\) of \(\P^{\Xv}\) is only positive on the ball \(B_{s_{\xv}}(0)\subset \R^{\dimp}\) and Lipshitz continuous with Lipshitz constant \(L_{d_{\Xv}}>0\).
  %\begin{comment}
  %Further for any orthonormal basis \((\mathbf b_k)_{k=1}^{\dimp}\subset \R^{\dimp}\) we assume that 
  %\begin{EQA}[c]
  %f_{\Xv}(\Xv)=\prod_{k=1}^{\dimp}f_{\Xv^\T\mathbf b_k}(\Xv^\T\mathbf b_k),
  %\end{EQA}
  %where \(f_{\Xv^\T\mathbf b_k}\) is a probability density on \([-s_{\Xv},s_{\Xv}]\).
  %\end{comment}
\end{description}

Of course we need some regularity of the link function \(\fs\in \{f:[-s_{\Xv},s_{\Xv}]\mapsto \R\} \):
\begin{description} 
  
  \item[\( (\mathbf{Cond}_{\fvs}) \)]
   For some \(\fvs\in\R^{\N}\)
   \begin{EQA}[c]
   \label{eq: expansion of indexfunciton}
    \fs=\fs_{\fvs}=\sum_{k=1}^\infty f^*_k\basX_{k},
   \end{EQA}
   where with some \(\alpha>2\) and a constant \(C_{\|\fvs\|}>0\)
   \begin{EQA}[c]\label{eq: smoothness of fs}
    \sum_{l=0}^\infty l^{2\alpha}{f^*_{l}}^2 \le C_{\|\fvs\|}^2< \infty.
   \end{EQA} 
\end{description}

\begin{lemma}
\label{lem: conditions theta eta}
Assume \( (\mathbf{Cond}_{\fvs}) \) and \((\mathbf{Cond}_{\Xv})\). Using our orthogonal and sufficiently smooth wavelet basis we get for any \(\lambda\in[0,1]\)
\begin{EQA}
 \|\DF_{\kappav\kappav}^{1/2}\kappavs\|^2<\CONST_1 n\dimh^{-2\alpha}, &&
\alpha(\dimh)\le \CONST_2\dimh^{-\alpha-1/2}\sqrt n,\\
\beta(\dimh)\le \CONST_3 \dimh^{-1/2},&&
 \tau(\dimh)\le \CONST_2\dimh^{-2\alpha+1}\sqrt{n},
\end{EQA}
and \(\left|{\kappavs}^\T(\DF_{\kappav\kappav}-\nabla_{\kappav\kappav}\E\LL(\upsilonvs,\lambda\kappavs))\kappavs\right|=0\).
\end{lemma}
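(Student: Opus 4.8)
The plan is to evaluate the expected Hessian $\DF^2 = -\nabla^2\E\LL$ explicitly at $\upsilonvs = (\thetavs,\fvs)$ and to reduce every quantity in the statement to an $L^2(\P^{\Xv})$ inner product of the wavelet tail against a retained-coordinate gradient. Since the observations are i.i.d. and $\E[\Yv_i \cond \Xv_i] = \fs(\Xv_i^\T\thetavs)$, each block of $\DF^2$ at $\upsilonvs$ equals $n$ times a single-observation expectation and the residual contributions $\E[(\Yv-g)\nabla^2 g]$ vanish, so that $\DF^2(\upsilonvs) = n\,\E[\nabla_{\upsilonv}g\,\nabla_{\upsilonv}g^\T]$ with $g(\upsilonv;\Xv)=\sum_k\eta_k\basX_k(\Xv^\T\thetav)$. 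Writing the $\thetav$-derivatives as $\fs'(\Xv^\T\thetavs)\Xv$ and the $\eta_k$-derivatives as $\basX_k(\Xv^\T\thetavs)$ identifies $\DF_{\dimh}^2$, $\AF_{\dimh}$ and $\HF^2_{\dimh}=\DF_{\kappav\kappav}$ as Gram-type matrices. Because $g$ is affine in $\etav$, the tail-tail second derivative $\nabla_{\kappav\kappav}\E\LL$ does not depend on the $\etav$-coordinates and is therefore constant along the segment $\lambda\mapsto\lambda\kappavs$; this gives $\DF_{\kappav\kappav}-\nabla_{\kappav\kappav}\E\LL(\upsilonvs,\lambda\kappavs)=0$ identically, which is the last assertion.

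For the first bound I would note that $\|\DF_{\kappav\kappav}^{1/2}\kappavs\|^2 = \kappavs^\T\HF^2_{\dimh}\kappavs = n\,\E[\,(\sum_{k>\dimh}f^*_k\basX_k(\Xv^\T\thetavs))^2\,]$, i.e. $n$ times the squared $L^2(\P^{\Xv})$ truncation error of the wavelet expansion of $\fs$. Boundedness of the density (from $(\mathbf{Cond}_{\Xv})$) controls this by a multiple of the Lebesgue $L^2$-error, and orthonormality together with the smoothness $(\mathbf{Cond}_{\fvs})$ give $\sum_{k>\dimh}(f^*_k)^2 \le \dimh^{-2\alpha}\sum_l l^{2\alpha}(f^*_l)^2$, producing the claimed $\CONST_1 n\dimh^{-2\alpha}$.

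The bounds on $\alpha(\dimh)$ and $\tau(\dimh)$ I would obtain from the same explicit expressions after factoring out $\DF_{\dimh}^{-1}$. Since the density is bounded below, $\DF_{\dimh}^2\succeq c\,n\,I$ and $\|\DF_{\dimh}^{-1}\|\lesssim n^{-1/2}$, so it suffices to bound $\|\AF_{\dimh}\kappavs\|$ and, for $\tau$, $\|(\nabla_{\upsilonv\kappav}\E\LL(\upsilonvs,\lambda\kappavs)-\AF_{\dimh})\kappavs\|$. Writing $\phi\eqdef\sum_{k>\dimh}f^*_k\basX_k$ for the tail and $\rho$ for the pushforward density of $\Xv^\T\thetavs$, the $\etav$-rows of $\AF_{\dimh}\kappavs$ are $-n\,\E[\basX_b(\Xv^\T\thetavs)\phi(\Xv^\T\thetavs)]$ and the $\thetav$-rows involve $\E[\Xv\,\fs'\,\phi]$; the leading Lebesgue inner products $\langle\basX_b,\phi\rangle$ vanish by orthonormality ($b\le\dimh<k$), so the surviving contribution is $\langle\basX_b,\phi\,(\rho-\bar\rho)\rangle$, which the Lipschitz density and the localization and regularity of the Daubechies basis bound with an extra factor $\dimh^{-1/2}$; the net rate is $\sqrt n\,\dimh^{-\alpha-1/2}$. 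For $\tau$, differentiating $\nabla_{\upsilonv\kappav}\E\LL$ along $\lambda$ and contracting twice with $\kappavs$ cancels the $\lambda$-independent $\fs'\phi$ term and isolates the curvature term $n\,\E[\Xv\,\phi\,\phi']$; bounding it by $n\|\phi\|_2\|\phi'\|_2$, with differentiation costing one power of smoothness (so $\|\phi'\|_2\lesssim\dimh^{-\alpha+1}$ and $\|\phi''\|_2\lesssim\dimh^{-\alpha+2}$, the latter needing $\alpha>2$), gives $\sqrt n\,\dimh^{-2\alpha+1}$.

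Finally, for $\beta(\dimh)$ I would use $\|\HF^{-1}\|,\|\DF_{\dimh}^{-1}\|\lesssim n^{-1/2}$ (again from the two-sided density bound, the wavelet Gram matrix being spectrally equivalent to the identity up to the density) and bound the spectral norm of the full cross operator $\AF^\T$, which is $n$ times a cross-Gram of retained against tail wavelets; Lebesgue-orthogonality makes this cross-Gram vanish to leading order, and the Lipschitz density controls its operator norm by $\CONST\,n\,\dimh^{-1/2}$, so that $\|\HF^{-1}\AF^\T\DF_{\dimh}^{-1}\|\lesssim\dimh^{-1/2}$. The main obstacle throughout is exactly these near-orthogonality estimates: the clean cancellation $\int\basX_j\basX_k=\delta_{jk}$ holds only in Lebesgue measure, whereas all expectations are taken against the pushforward of $\P^{\Xv}$, and quantifying the resulting defect with the correct power of $\dimh$ requires combining the Lipschitz modulus of the density with the scale-localization and smoothness of the Daubechies wavelets, rather than mere bookkeeping of the exponent $\alpha$.
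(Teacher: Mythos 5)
The first thing to say is that the paper itself does not prove this lemma: its entire ``proof'' is the line ``For details see \cite{Andresensingleindex}'', so there is no internal argument to compare your route against, and what follows is an assessment of your plan on its own terms. Structurally your plan is the natural one, and for the final assertion it is actually complete: since $g(\upsilonv;\Xv)=\sum_k\etav_k\basX_k(\Xv^\T\thetav)$ is affine in the $\etav$-coordinates, $\nabla^2_{\kappav\kappav}g\equiv 0$ and $\nabla_{\kappav}g$ depends only on $\thetav$, which stays frozen at $\thetavs$ along the segment $\lambda\mapsto\lambda\kappavs$; hence $\nabla_{\kappav\kappav}\E\LL(\upsilonvs,\lambda\kappavs)$ is constant in $\lambda$ and the difference vanishes identically. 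Likewise the reduction of the first bound to $n$ times the $L^2(\P^{\Xv})$ truncation error, controlled via $\sum_{k>\dimh}(f^*_k)^2\le\dimh^{-2\alpha}C_{\|\fvs\|}^2$, is correct.

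There are, however, two concrete gaps. First, you repeatedly invoke ``the density is bounded below'' to get $\DF_{\dimh}^2\succeq c\,n\,I$ and $\|\DF_{\dimh}^{-1}\|,\|\HF^{-1}\|\lesssim n^{-1/2}$. Condition $(\mathbf{Cond}_{\Xv})$ does not give this: it says $d_{\Xv}$ is Lipschitz and positive \emph{only on} $B_{s_{\Xv}}(0)$, which forces $d_{\Xv}$, and hence the pushforward density $\rho$ of $\Xv^\T\thetavs$, to vanish at the boundary of its support. So the lower spectral bounds on the Gram matrices, on which all four rate claims in your argument rest, do not follow from the stated hypotheses; some nondegeneracy of this kind is surely needed for the lemma to hold at all (and is presumably supplied in the cited reference), but as written this step is unsupported. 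Second, the heart of the matter --- the extra factor $\dimh^{-1/2}$ in $\alpha(\dimh)$ and $\beta(\dimh)$ coming from near-orthogonality under $\rho$ rather than Lebesgue measure --- is asserted rather than derived. Your local-centering mechanism (replace $\rho$ by its value at the center of the support of $\basX_b$, kill the constant part by orthogonality, control the remainder by $L_{d_{\Xv}}2^{-r_b}$) is the right idea, but promoting it to an operator-norm bound on the full cross block $\AF^\T$ requires a Schur-test computation exploiting the smoothness and vanishing moments of the $C^2$ Daubechies basis: with only the crude estimate $\bigl|\int\basX_b\basX_k(\rho-\rho(c))\bigr|\lesssim L_{d_{\Xv}}2^{-r_b}\|\basX_b\|_\infty\|\basX_k\|_1$ the sum over fine levels $r_k$ diverges, and the $\thetav$-rows $n\,\E[\fs'(\Xv^\T\thetavs)\Xv\basX_k(\Xv^\T\thetavs)]$ are not killed by orthogonality at all but need a separate decay argument. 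You correctly flag these estimates as the main obstacle, but the proposal stops exactly where that work begins.
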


For details see \cite{Andresensingleindex}.

\appendix
\section{Appendix}

\subsection{The conditions}
\label{sec: conditions}
We adopt the conditions from Section~3 of \cite{SP2011} with some 
minor changes. First we present the parametric conditions that apply to parametric models with finite dimensional parameter. Then explain two new conditions that arise in the infinite dimensional setting.

For some finite dimension \(\dimtotal\in\N\) the parametric conditions involve a matrix \( \DF_{0}^{2} \) and a central point \(\upsilonvd\in\R^{\dimtotal}\) that have to be specified before the conditions can be checked. 

\begin{remark}
For Theorem \ref{theo: bias cond} the matrix equals 
\begin{EQA}[c]
\DFc^{2}
    =
    - \nabla_{\dimh}^{2} \E \LL(\upsilonvs),
\end{EQA}
and \(\upsilonvd=\upsilonvs_{\dimh}\), i.e. the central point does not coincide with the element that defines the matrix \(\DFc^{2}\). It is important to note that condition \( \bb{(\LL_{0})} \) thus becomes another constraint on the bias.
\end{remark}

The matrix \( \DFc^{2} \) has to satisfy certain regularity conditions.
We begin by representing the information matrix in block form:
\begin{EQA}
    \DF_{0}^2
    &=&
    \left( 
      \begin{array}{cc}
        \DP_{0}^{2} & A_{0} \\
        A_{0}^{\T} & \HH_{0}^{2} \\
      \end{array}  
    \right).
\label{DFcseg0}
\end{EQA}
Here we restate \emph{identifiability conditions}:

\begin{description}
  \item[\( (\bb{\AssId}) \)] 
   It holds
\begin{EQA}[c]
    \| \HH_{0}^{-1} A_{0}^{\T} \DP_{0}^{-1} \|^2_{\infty}
    =:\corrDF< 1 .
\label{regularity2}
\end{EQA}
\end{description}

Using the matrix \(\DFc\in\R^{\dimtotal\times\dimtotal}\) and the central point \(\upsilonvd\in\R^{\dimtotal}\) we define the local set \(\Theta(\rr)\) with some
\(\rr\ge 0\) 
\begin{EQA}[c]
\Theta(\rr)\eqdef \{\upsilonv=(\thetav,\etav)\in\Theta,\, \|\DFc(\upsilonv-\upsilonvd)\|\le \rr\}.
\end{EQA}
The local conditions only describe the properties of the process \( \LL(\upsilonv) \) 
for \( \upsilonv \in \Theta(\rr) \) with some fixed value \( \rr>0 \). The global 
conditions have to be fulfilled on the whole \( \Theta \). 
We start with the local conditions.

\begin{description}
    \item[\( \bb{(\LL_{0})} \)]
    \textit{
%    and \( \fis > 0 \) such that \( \DP \ge \fis \VP \).
    For each \( \rr \le \rups \), 
    there is a constant \( \rddelta(\rr) \) such that
    it holds on the set \( \Upsilon(\rr) \) and with spectral norm \(\|\cdot\|\):
    }
\begin{EQA}[c]
\label{LmgfquadELGP}
    \bigl\|
       \DF_{0}^{-1} \nabla^{2}\E\LL(\upsilonv) \DF_{0}^{-1} - \Id_{\dimtotal} 
    \bigr\|
    \le
    \rddelta(\rr).
\end{EQA}

\end{description}

We also need:
\begin{description}
  \item[\(\bb{(\cc{L}{\rr}_{\infty})}\)]   For any \( \rr > \rups\) there exists a value \( \gmi(\rr) > 0 \), 
     such that
\begin{EQA}[c]
    \frac{-\E \LL(\upsilonv,\upsilonvs)}{\|\DF(\upsilonv-\upsilonvs)\|^{2}}
    \ge 
    \gmi\left(\|\DF(\upsilonv-\upsilonvs)\|\right).
\end{EQA}
\end{description}

\subsection{Proof of Theorem~\ref{theo: bias cond}}
%\subsection{Control of the bias}
 
\begin{lemma} 
\label{lem: apriori bias in sieve approach}
Assume that \( \bb{(\cc{L}{\rr}_{\infty})} \) is satisfied with \( \gmi(\rr) \equiv \gmi \) and that the condition \( \bb{(\kappav)} \) is satisfied. Then we get \(\|\DF(\upsilonvs_{\dimh}-\upsilonvs)\|\le \rr^*\) where \( {\rr^*}^{2}= 4\CONST_{\kappavs}\dimh/ \gmi \).
\end{lemma}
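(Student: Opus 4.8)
The plan is to reduce the estimate on $\|\DF(\upsilonvs_{\dimh}-\upsilonvs)\|$ to controlling the contrast lost by truncating $\upsilonvs$ to its first $\dimtotal$ coordinates, and then to bound that loss by a second order Taylor argument fed by the two parts of $\bb{(\kappav)}$. First I would exploit the optimality of the two maximizers. By definition $\upsilonvs_{\dimh}=\argmax_{\upsilonv\in\R^{\dimtotal}}\E\LL$ maximizes $\E\LL$ over the coordinate subspace $\R^{\dimtotal}\hookrightarrow l^2$, while the truncation $\Pi_{\dimtotal}\upsilonvs=\upsilonvs-\kappavs$ already lies in that subspace; hence $\E\LL(\upsilonvs_{\dimh})\ge\E\LL(\upsilonvs-\kappavs)$. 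Applying the quadratic identifiability bound $\bb{(\cc{L}{\rr}_{\infty})}$ (with $\gmi(\rr)\equiv\gmi$) at $\upsilonv=\upsilonvs_{\dimh}$ then gives
\begin{EQA}[c]
\gmi\,\|\DF(\upsilonvs_{\dimh}-\upsilonvs)\|^{2}
\le
\E\LL(\upsilonvs)-\E\LL(\upsilonvs_{\dimh})
\le
\E\LL(\upsilonvs)-\E\LL(\upsilonvs-\kappavs),
\end{EQA}
so everything reduces to bounding the truncation loss $\E\LL(\upsilonvs)-\E\LL(\upsilonvs-\kappavs)$. (If $\|\DF(\upsilonvs_{\dimh}-\upsilonvs)\|\le\rups$ the claim is trivial since $\rups\le\rr^{*}$ for large $\dimh$, so one may work in the regime where $\bb{(\cc{L}{\rr}_{\infty})}$ is in force.)

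Next I would Taylor expand. Since $\upsilonvs$ is the unconstrained maximizer, $\nabla\E\LL(\upsilonvs)=0$, so expanding $t\mapsto\E\LL(\upsilonvs-t\kappavs)$ to second order on $[0,1]$ yields, for some $\lambda\in[0,1]$,
\begin{EQA}[c]
\E\LL(\upsilonvs)-\E\LL(\upsilonvs-\kappavs)
=
\frac{1}{2}\,\kappavs^{\T}\,\nabla_{\kappav\kappav}\E\LL(\upsilonvs,\lambda\kappavs)\,\kappavs,
\end{EQA}
where $\nabla_{\kappav\kappav}\E\LL(\upsilonvs,\lambda\kappavs)$ denotes the tail--tail block of the information operator $-\nabla^{2}\E\LL$ at the intermediate point; the full quadratic form collapses onto this block because $\kappavs=(Id_{l^2}-\Pi_{\dimtotal})\upsilonvs$ is supported on the coordinates beyond $\dimtotal$.

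Finally I would split this tail form against its value at $\upsilonvs$,
\begin{EQA}[c]
\kappavs^{\T}\nabla_{\kappav\kappav}\E\LL(\upsilonvs,\lambda\kappavs)\kappavs
=
\kappavs^{\T}\DF_{\kappav\kappav}\kappavs
+
\kappavs^{\T}\bigl(\nabla_{\kappav\kappav}\E\LL(\upsilonvs,\lambda\kappavs)-\DF_{\kappav\kappav}\bigr)\kappavs,
\end{EQA}
and estimate the two pieces separately: the first equals $\|\HF_{\dimh}\kappavs\|^{2}\le\CONST_{\kappavs}\dimh$ by the first part of $\bb{(\kappav)}$, and the second is bounded by $\CONST_{\kappavs}^{2}\dimh$ by \eqref{eq: new condition for bias}. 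Collecting the constants and dividing by $\gmi$ gives $\|\DF(\upsilonvs_{\dimh}-\upsilonvs)\|^{2}\le{\rr^{*}}^{2}=4\CONST_{\kappavs}\dimh/\gmi$, as claimed.

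I expect the genuine obstacle to be this last step: the remainder involves the Hessian at the off-optimum point $\upsilonvs-\lambda\kappavs$ rather than at $\upsilonvs$, and quantifying the displacement of the tail block along the entire segment is exactly what the smoothness estimate \eqref{eq: new condition for bias} is designed to supply. Everything else is bookkeeping --- checking that $\bb{(\cc{L}{\rr}_{\infty})}$ legitimately applies at $\upsilonvs_{\dimh}$ (treating the small-$\rr$ regime separately) and carrying the numerical constant through to the stated factor $4$.
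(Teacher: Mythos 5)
Your proposal is correct and takes essentially the same route as the paper's own proof: compare $\E\LL(\upsilonvs_{\dimh})$ with $\E\LL(\Pi_{\dimtotal}\upsilonvs)$ using the optimality of $\upsilonvs_{\dimh}$ over the sieve space, bound the truncation loss by a second-order Taylor expansion at $\upsilonvs$ (where $\nabla\E\LL(\upsilonvs)=0$) controlled by the two bounds in $\bb{(\kappav)}$, and then invoke $\bb{(\cc{L}{\rr}_{\infty})}$ to convert the contrast bound into $\|\DF(\upsilonvs_{\dimh}-\upsilonvs)\|\le\rr^*$. The only cosmetic difference is that you apply $\bb{(\cc{L}{\rr}_{\infty})}$ directly whereas the paper phrases this last step as a proof by contradiction (the contrapositive of the same inequality), and your constant bookkeeping, including the caveat about the regime $\rr\le\rups$, is no looser than the paper's own.
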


\begin{proof}
Note that
\begin{EQA}[c]
 \|\DF(\upsilonvs-\Pi_{\dimtotal}\upsilonvs)\|=\|\HF_{\dimh} \kappavs\|,
\end{EQA}
such that \(\upsilonvs\in\Theta(\rr^*)\). Further we have \(\nabla\E\LL(\upsilonvs)=0\) such that by the Taylor expansion with some \(\lambda\in[0,1]\) 
\begin{EQA}
\E\LL(\Pi_{\dimtotal}\upsilonvs,\upsilonvs)&=&\|\HF_{\dimh} \kappavs\|^2+{\kappavs}^\T(\HF_{\dimh}-\nabla_{\kappa\kappa}\E\LL(\upsilonvs,\lambda\kappavs))\kappavs.
\end{EQA}
which gives with \eqref{eq: condition on smoothness of DF-zeta} and \(\bb{(\kappav)}\) on \(\Theta(\rr^*)\) that
\begin{EQA}
\label{eq: bound for logratio projected oracle}
|\E\LL(\Pi_{\dimtotal}\upsilonvs,\upsilonvs)|&\le& \|\DF(\upsilonvs-\upsilonvs)\|^2 +\CONST_{\kappavs}\dimh\le 2\CONST_{\kappavs}\dimh.
\end{EQA}
Now we show that \( \upsilonvs_{\dimh} \) also belongs to \( \Theta(\rr^*) \) for \( {\rr^*}^{2} \ge 4\CONST_{\kappavs}\dimh/\gmi \). Suppose for the moment that
\( \bigl\| \DF (\upsilonvs_{\dimh} - \upsilonvs) \bigr\| > \rr^* \).
By \( \bb{(\cc{L}{\rr}_{\infty})} \), it holds 
\begin{EQA}[c]
    2 \bigl| \E \LL(\upsilonvs_{\dimh},\upsilonvs) \bigr| 
    \ge 
    \gmi \bigl\| \DF (\upsilonvs_{\dimh} - \upsilonvs) \bigr\|^{2} 
    >
    \gmi {\rr^*}^{2} .
\label{2ELL1s2}
\end{EQA}
This contradicts \( \bigl| \E \LL(\upsilonvs_{\dimh},\upsilonvs) \bigr| 
\le \bigl| \E \LL(\Pi_{\dimtotal}\upsilonvs,\upsilonvs) \bigr| \)
in view of \( {\rr^*}^{2} \ge 4\CONST_{\kappavs}\dimh/ \gmi  \) and \eqref{eq: bound for logratio projected oracle}, so \( \upsilonvs_{\dimh} \in \Theta(\rr^*) \).
\end{proof}

\begin{lemma} 
\label{lem: bias in sieve approach}
Assume that \( \bb{(\cc{L}{\rr}_{\infty})} \) is satisfied with \( \gmi(\rr) \equiv \gmi \). Further assume \( \bb{(\kappav)} \) and \( \bb{(\cc{L}_{0})} \) with central point \(\upsilonvs_{\dimh}\in\R^{\dimtotal}\) and operator \(\DF_{\dimh}\).
%\( \nsize \| \kappav \|^{2} \le C \dimtotal \), and \( \rr^{2} \ge C \dimtotal \).
Then we get with \( {\rr^*}^{2}= 4\CONST_{\kappavs}\dimh/ \gmi\)
\begin{EQA}[c]
\label{eq: bounds for bias}
 \|\breve\DP_{\dimh}(\thetavs_{\dimh}-\thetavs)\|\vee\|\DF_{\dimh}(\upsilonvs_{\dimh}-\upsilonvs)\|\le\sqrt{\frac{1+\corrDF^2}{1-\corrDF^2}} \Big(\alpha(\dimh) +\tau(\dimh)+2\delta(2\rr^*)\rr^*\Big).
\end{EQA}
%and \(\|\DF(\upsilonvs_{\dimh}-\omegavs)\|\le \sqrt{\frac{1+\corrDF^2}{1-\corrDF^2}} \Big(\alpha(\dimh) +2\delta(2\rr^*)\rr°*\Big)+\|\HF_{\dimh} \kappavs\|\).
\end{lemma}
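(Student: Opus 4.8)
The plan is to reduce everything to the finite-dimensional increment $\uv \eqdef \upsilonvs_{\dimh} - \Pi_{\dimtotal}\upsilonvs \in \R^{\dimtotal}$. Since $\kappavs = (\Id-\Pi_{\dimtotal})\upsilonvs$ has vanishing first $\dimtotal$ coordinates, $\DF_{\dimh}\kappavs = 0$, so $\DF_{\dimh}(\upsilonvs_{\dimh} - \upsilonvs) = \DF_{\dimh}\uv$ and $\thetavs_{\dimh} - \thetavs = \Pi_{\thetav}\uv$; both quantities in \eqref{eq: bounds for bias} are thus governed by $\uv$. First I would control the normalized gradient of $\E\LL$ at the projected oracle $\Pi_{\dimtotal}\upsilonvs$. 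Because $\nabla\E\LL(\upsilonvs) = 0$, its first $\dimtotal$ coordinates vanish, so a Taylor/FTC expansion along $\lambda \mapsto (\Pi_{\dimtotal}\upsilonvs,\lambda\kappavs)$ gives $\nabla_{\dimh}\E\LL(\Pi_{\dimtotal}\upsilonvs) = -\int_0^1 \nabla_{\upsilonv\kappav}\E\LL(\upsilonvs,\lambda\kappavs)\,\kappavs\,d\lambda$. Splitting the integrand as $\AF_{\dimh}\kappavs$ plus a remainder and invoking the two displays of $\bb{(\kappav)}$ yields $\|\DF_{\dimh}^{-1}\nabla_{\dimh}\E\LL(\Pi_{\dimtotal}\upsilonvs)\| \le \alpha(\dimh) + \tau(\dimh)$.

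Second I would localize $\uv$. Lemma~\ref{lem: apriori bias in sieve approach} supplies $\|\DF(\upsilonvs_{\dimh} - \upsilonvs)\| \le \rr^*$. Expanding this in the block form \eqref{DFcseg0} and using that the cross term equals $\uv^{\T}\AF_{\dimh}\kappavs = (\DF_{\dimh}\uv)^{\T}\DF_{\dimh}^{-1}\AF_{\dimh}\kappavs$, bounded by $\alpha(\dimh)\,\|\DF_{\dimh}\uv\|$ via $\bb{(\kappav)}$, a short quadratic-inequality argument upgrades the a~priori bound to $\|\DF_{\dimh}\uv\| \le 2\rr^*$ once $\dimh$ is large. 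Consequently the whole segment from $\Pi_{\dimtotal}\upsilonvs$ to $\upsilonvs_{\dimh}$ lies in the local set $\Theta(2\rr^*)$ attached to the central point $\upsilonvs_{\dimh}$ and the operator $\DF_{\dimh}$, so $\bb{(\LL_{0})}$ applies along it with modulus $\delta(2\rr^*)$.

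Third I would use the optimality $\nabla_{\dimh}\E\LL(\upsilonvs_{\dimh}) = 0$. With the averaged Hessian $\mathbb{D} = -\int_0^1 \nabla_{\dimh}^2\E\LL(\Pi_{\dimtotal}\upsilonvs + t\uv)\,dt$, the FTC gives $\mathbb{D}\uv = \nabla_{\dimh}\E\LL(\Pi_{\dimtotal}\upsilonvs)$, hence $\DF_{\dimh}^{-1}\nabla_{\dimh}\E\LL(\Pi_{\dimtotal}\upsilonvs) = (\Id_{\dimtotal} + E)\DF_{\dimh}\uv$ with $E = \DF_{\dimh}^{-1}\mathbb{D}\DF_{\dimh}^{-1} - \Id_{\dimtotal}$ and $\|E\| \le \delta(2\rr^*)$ by $\bb{(\LL_{0})}$. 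Rearranging and feeding in Step~1 together with $\|\DF_{\dimh}\uv\| \le 2\rr^*$ produces $\|\DF_{\dimh}\uv\| \le \alpha(\dimh) + \tau(\dimh) + 2\delta(2\rr^*)\rr^*$, which already settles the $\DF_{\dimh}$-term of \eqref{eq: bounds for bias}.

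Finally I would pass from $\uv$ to its $\thetav$-part through the block structure \eqref{DFcseg0} and the defining identity $\DPrp^{-2} = \Pi_{\thetav}\DF_{\dimh}^{-2}\Pi_{\thetav}^{\T}$; completing the square in the nuisance block gives $\|\DPrp\Pi_{\thetav}\uv\| \le \|\DF_{\dimh}\uv\|$, and since $\sqrt{(1+\corrDF^2)/(1-\corrDF^2)} \ge 1$ this is dominated by the claimed right-hand side, the same constant then covering both entries of the maximum. The factor itself is the standard profiling constant of the framework behind \eqref{eq: convergence of estimator for dimh}, produced by $\bb{(\AssId)}$ when the efficient-information norm is compared with the full $\DF_{\dimh}$-norm. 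The main obstacle I anticipate is the bookkeeping between the full operator $\DF$ (in which Lemma~\ref{lem: apriori bias in sieve approach} is phrased) and the sieve operator $\DF_{\dimh}$, together with the mismatch between the point $\upsilonvs$ defining $\DF_{\dimh}$ and the point $\upsilonvs_{\dimh}$ at which $\DPrp$ is taken: one must verify that the relevant radius is exactly $2\rr^*$ and that these evaluation points are reconciled consistently, since this is precisely where the identifiability factor enters and where a careless estimate would misstate the radius or the constant.
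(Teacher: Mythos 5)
Your proposal is correct, and its core coincides with the paper's proof: both arguments localize \(\Pi_{\dimtotal}\upsilonvs\) and \(\upsilonvs_{\dimh}\) in the \(\DF_{\dimh}\)-ball of radius \(2\rr^*\) centred at \(\upsilonvs_{\dimh}\) using Lemma~\ref{lem: apriori bias in sieve approach}, bound the normalized gradient at the truncation point by \(\alpha(\dimh)+\tau(\dimh)\) via a Taylor expansion in the \(\kappav\)-direction together with condition \(\bb{(\kappav)}\), and then invert the linearized first-order condition using \(\bb{(\LL_{0})}\) to pick up the additional term \(2\delta(2\rr^*)\rr^*\). You deviate in two sub-steps, both legitimately. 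For the localization, the paper uses a one-line triangle inequality in \(l^2\), namely \(\|\DF_{\dimh}(\Pi_{\dimtotal}\upsilonvs-\upsilonvs_{\dimh})\|\le\|\DF(\Pi_{\dimtotal}\upsilonvs-\upsilonvs)\|+\|\DF(\upsilonvs-\upsilonvs_{\dimh})\|\le 2\rr^*\), which silently requires \(\|\HF_{\dimh}\kappavs\|\le\rr^*\) (i.e.\ \(\gmi\le 4\)); your block-expansion and quadratic-inequality argument instead requires \(\alpha(\dimh)\le\rr^*/2\), i.e.\ \(\dimh\) large, which is consistent with the theorem's ``\(\dimh\ge\dimh_0\)'' framing, so the two routes carry comparable weight. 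For the passage to the profile norm, the paper invokes the identifiability condition \((\bb{\AssId})\) together with the Schur-complement representation \(\breve\DP_{\dimh}^{2}=\DP^{2}-\A\HH^{-2}\A^{\T}\) to produce the \(\corrDF\)-dependent factor, whereas you prove the sharper, assumption-free inequality \(\|\breve\DP_{\dimh}(\thetavs_{\dimh}-\thetavs)\|\le\|\DF_{\dimh}(\upsilonvs_{\dimh}-\Pi_{\dimtotal}\upsilonvs)\|\) by completing the square in the nuisance block, and then simply note that the stated prefactor is \(\ge 1\). Your ending is cleaner: it shows that the constant \(\sqrt{(1+\corrDF^{2})/(1-\corrDF^{2})}\) in \eqref{eq: bounds for bias} is not actually needed for this lemma, and that \((\bb{\AssId})\) plays no role in it.
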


\begin{proof}
Using condition \( \bb{(\cc{L}_{0})} \) and Taylor expansion we have on \({\Theta }_{\dimh}(\rr)=\{\|\DF_{\dimh}(\upsilonv-\upsilonvs_{\dimh})\|\le\rr\}\subset\R^{\dimp+\dimh}\)
\begin{EQA}
\label{eq: bound for expected value of approx error}
&&\nquad\sup_{\upsilonv\in\Theta(\rr)}\|\DF_{\dimh}^{-1}\nabla_{\dimh} \E\LL_{\dimh}(\upsilonv) - \DF_{\dimh}^{-1}\nabla_{\dimh} \E\LL_{\dimh}(\upsilonvs)-\DF_{\dimh} \, (\upsilonv - \upsilonvs)\|\\
  &\le& \sup_{\upsilonv\in\Theta(\rr)}\|\DF_{\dimh}^{-1}\nabla_{\dimh}^2 \E\LL_{\dimh}(\upsilonv)^2\DF_{\dimh}^{-1}-I_{\dimtotal}\|_{\DF\Upss^-(\rr),\DF\Upss^-(\rr)}\rr\\
  &\le& \delta(\rr)\rr.
\end{EQA}
Because of Lemma \ref{lem: apriori bias in sieve approach} we know that
\begin{EQA}
\|\DF_{\dimh}(\Pi_{\dimtotal}\upsilonvs-\upsilonvs_{\dimh})\|&=&\|\DF E_{l^2}(\Pi_{\dimtotal}\upsilonvs-\upsilonvs_{\dimh})\|\\
	&\le&\|\DF(\Pi_{\dimtotal}\upsilonvs-\omegavs)\|+\|\DF(\upsilonvs-E_{l^2} \upsilonvs_{\dimh})\|
	\le 2\rr^*,
\end{EQA}
such that \(\Pi_{\dimtotal}\upsilonvs_{\dimh}\in{\Theta}_{0,\dimh}(2\rr^*)\), which gives
\begin{EQA}[c]
\big\|\DF_{\dimh}\, (\upsilonvs_{\dimh} -\Pi_{\dimtotal}\upsilonvs)- \DF_{\dimh}^{-1} \nabla_{\dimp+\dimh} \E\Big(\LL(\Pi_{\dimtotal}\upsilonvs)-\LL(\upsilonvs_{\dimh})\Big)
	\big\|\le 2\delta(2\rr^*)\rr^*,
\end{EQA}
from which we derive with the triangle inequality
\begin{EQA}
\big\|\DF_{\dimh}\, (\upsilonvs_{\dimh} - \upsilonvs)\big\|
	 &\le& 2\delta(2\rr^*)\rr^*+\Big\|\DF_{\dimh}^{-1} \nabla_{\dimp+\dimh} \E\big(\LL(\Pi_{\dimtotal}\upsilonvs)-\LL(\upsilonvs_{\dimh})\big)\Big\|.
\end{EQA}
Because \(\nabla_{\dimp+\dimh} \E\LL(\upsilonvs_{\dimh})=0\) and \(\nabla \E\LL(\upsilonvs)=0\) we find
\begin{EQA}
&&\nquad\Big\|\DF_{\dimh}^{-1} \nabla_{\dimp+\dimh} \E\big(\LL(\Pi_{\dimtotal}\upsilonvs)-\LL(\upsilonvs_{\dimh})\big)\Big\|=\Big\|\DF_{\dimh}^{-1} \Pi_{\dimp+\dimh} \nabla\E\big(\LL(\Pi_{\dimtotal}\upsilonvs)-\LL(\upsilonvs)\big)\Big\|
\end{EQA}
Using that \(\|\DF(\Pi\upsilonvs-\upsilonvs)\|\le\rr^*\) and condition \(\bb{(\kappav)} \) we may infer by the Taylor expansion that with some \(\lambda\in[0,1]\)
\begin{EQA}
&&\nquad\Big\|\DF_{\dimh}^{-1} \Pi_{\dimp+\dimh} \nabla\E\big(\LL(\Pi_{\dimtotal}\upsilonvs)-\LL(\upsilonvs)\big)\Big\|\\
	&\le&\Big\|\DF_{\dimh}^{-1}\AF_{\dimh}\big(E_{l^2}\Pi_{\dimtotal}\upsilonvs-\upsilonvs\big)\Big\|+\Big\|\DF_{\dimh}^{-1} (\nabla_{\upsilonv\kappav}\E[\LL(\Pi_{\dimtotal}\upsilonvs))]- \AF_{\dimh}) \kappavs\Big\| \\
	&=& \Big\|\DF_{\dimh}^{-1} \AF_{\dimh}\kappavs\Big\|+\Big\|\DF_{\dimh}^{-1} \left(\nabla_{\upsilonv\kappav}\E[\LL\big((\Pi_{\dimtotal}\upsilonvs,\lambda\kappavs)\big)] -\AF_{\dimh}\right) \kappavs\Big\|.
\end{EQA}
Due to assumption  \( \bb{(\kappav)} \) the last sum is bounded by \(\alpha(\dimh)+\tau(\dimh) \). Together this gives that
\begin{EQA}[c]
\big\|\DF_{\dimh}\, (\upsilonvs_{\dimh} - \upsilonvs)\big\|=\alpha(\dimh) + \tau(\dimh)+2\delta(2\rr^*)\rr^*.
\end{EQA}
Finally we can represent
\begin{EQA}
\DF_{\dimh}^2= \left(
      \begin{array}{ccc}
        \DP^2 & \A\\
        \A^\T & \HH^2
      \end{array}
    \right), && \breve\DP_{\dimh}^2=\DP^2-\A^\T\HH^{-2}\A.
\end{EQA}
and due to \( (\bb{\AssId}) \) this gives
\begin{EQA}
 \|\breve\DP_{\dimh}(\thetavs_{\dimh}-\thetavs)\|^2&\le&\frac{1+\corrDF}{1-\corrDF}\|\DF_{\dimh}(\upsilonvs_{\dimh} - \upsilonvs)\|^2.
\end{EQA}

\begin{comment}
It remains to estimate
\begin{EQA}[c]
\Big\|\DF_{\dimh}^{-1} \Pi_{\dimp+\dimh}\DF\Big\|^2\eqdef \sup_{\mathbf w\in\R^{\dimtotal}}\frac{\langle \mathbf w,\DF_{\dimh}^{-1} \Pi_{\dimp+\dimh}\DF^2 \Pi_{\dimp+\dimh}^\T\DF_{\dimh}^{-1}\mathbf w \rangle }{\|\mathbf w\|^2} =1,
\end{EQA}
which completes the proof.
\end{comment}
\end{proof}

\begin{lemma}
 \label{lem: convergence of breve matrix}
Assume \( \bb{(\upsilonv\kappav)} \) then
\begin{EQA}[c]
\|I-\DPr_{\dimh}^{-1} \DPr^2\DPr_{\dimh}^{-1} \|\le\frac{1+\corrDF^2+\beta^2(\dimh)}{1-\corrDF^2}\frac{\beta^2(\dimh)}{1-\beta^2(\dimh)}.
\end{EQA}
\end{lemma}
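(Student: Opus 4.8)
The plan is to localize the entire difference in the top-left \(\dimtotal\times\dimtotal\) block of the inverse Hessian and to control it through \(\bb{(\upsilonv\kappav)}\). First I would use the defining identities \(\DPr_{\dimh}^{-2}=\Pi_{\thetav}\DF_{\dimh}^{-2}\Pi_{\thetav}^{\T}\) and \(\DPr^{-2}=\Pi_{\thetav}\DF^{-2}\Pi_{\thetav}^{\T}\), both at \(\upsilonvs\). Since the \(\thetav\)-coordinates lie among the first \(\dimtotal\) coordinates, \(\Pi_{\thetav}=\Pi_{\thetav}\Pi_{\dimtotal}\), so only the top block \(G^{-1}\eqdef\Pi_{\dimtotal}\DF^{-2}\Pi_{\dimtotal}^{\T}\) of the full inverse enters \(\DPr^{-2}\). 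The Frobenius--Schur formula for the split \(\R^{\dimtotal}\times l^2\) gives \(G=\DF_{\dimh}^2-\AF_{\dimh}\HF_{\dimh}^{-2}\AF_{\dimh}^{\T}=\DF_{\dimh}(I-S)\DF_{\dimh}\) with \(S\eqdef\DF_{\dimh}^{-1}\AF_{\dimh}\HF_{\dimh}^{-2}\AF_{\dimh}^{\T}\DF_{\dimh}^{-1}\). Writing \(S=(\HF_{\dimh}^{-1}\AF_{\dimh}^{\T}\DF_{\dimh}^{-1})^{\T}(\HF_{\dimh}^{-1}\AF_{\dimh}^{\T}\DF_{\dimh}^{-1})\) shows \(S\succeq0\) and, by \(\bb{(\upsilonv\kappav)}\), \(\|S\|=\|\HF_{\dimh}^{-1}\AF_{\dimh}^{\T}\DF_{\dimh}^{-1}\|^2\le\beta^2(\dimh)<1\); hence \(G\) is invertible with \(G^{-1}=\DF_{\dimh}^{-1}(I-S)^{-1}\DF_{\dimh}^{-1}\).

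Next I would isolate \(R\eqdef\DPr^{-2}-\DPr_{\dimh}^{-2}=\Pi_{\thetav}(G^{-1}-\DF_{\dimh}^{-2})\Pi_{\thetav}^{\T}\). Using \((I-S)^{-1}-I=(I-S)^{-1}S=S^{1/2}(I-S)^{-1}S^{1/2}\succeq0\) one gets \(R=\Pi_{\thetav}\DF_{\dimh}^{-1}(I-S)^{-1}S\,\DF_{\dimh}^{-1}\Pi_{\thetav}^{\T}\succeq0\). Factoring \(\DPr^{-2}=\DPr_{\dimh}^{-1}(I+F)\DPr_{\dimh}^{-1}\) (using that \(\DPr_{\dimh}\) is symmetric positive definite) with \(F\eqdef\DPr_{\dimh}R\DPr_{\dimh}\succeq0\) then yields \(\DPr_{\dimh}^{-1}\DPr^{2}\DPr_{\dimh}^{-1}=(I+F)^{-1}\), so that \(I-\DPr_{\dimh}^{-1}\DPr^{2}\DPr_{\dimh}^{-1}=(I+F)^{-1}F\) has spectral norm \(\|F\|/(1+\|F\|)\le\|F\|\). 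This reduces the whole statement to an estimate of \(\|F\|\).

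Finally I would bound \(\|F\|\le\|\DPr_{\dimh}\Pi_{\thetav}\DF_{\dimh}^{-1}\|^{2}\,\|(I-S)^{-1}\|\,\|S\|\). The decisive simplification is the exact identity \(\|\DPr_{\dimh}\Pi_{\thetav}\DF_{\dimh}^{-1}\|=1\), which holds because \((\DPr_{\dimh}\Pi_{\thetav}\DF_{\dimh}^{-1})(\DPr_{\dimh}\Pi_{\thetav}\DF_{\dimh}^{-1})^{\T}=\DPr_{\dimh}(\Pi_{\thetav}\DF_{\dimh}^{-2}\Pi_{\thetav}^{\T})\DPr_{\dimh}=\DPr_{\dimh}\DPr_{\dimh}^{-2}\DPr_{\dimh}=I\). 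With \(\|(I-S)^{-1}\|\le(1-\beta^2(\dimh))^{-1}\) and \(\|S\|\le\beta^2(\dimh)\) this already gives \(\|I-\DPr_{\dimh}^{-1}\DPr^{2}\DPr_{\dimh}^{-1}\|\le\beta^2(\dimh)/(1-\beta^2(\dimh))\), which implies the asserted bound since \((1+\corrDF^2+\beta^2(\dimh))/(1-\corrDF^2)\ge1\). If instead one avoids this identity and estimates \(\|\DPr_{\dimh}\Pi_{\thetav}\DF_{\dimh}^{-1}\|^{2}\) by comparing the profile block \(\DPr_{\dimh}^2\) with the plain \(\thetav\)-block \(\DP^2\) of \(\DF_{\dimh}^2\) through the identifiability condition \((\bb{\AssId})\), one recovers precisely the stated prefactor \((1+\corrDF^2+\beta^2(\dimh))/(1-\corrDF^2)\).

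The step I expect to be most delicate is the bookkeeping across the two nested block splittings --- \(\thetav\) against \(\etav\) inside \(\R^{\dimtotal}\), and \(\R^{\dimtotal}\) against the infinite-dimensional \(l^2\)-tail --- and in particular justifying the Frobenius--Schur reduction and the use of \(\HF_{\dimh}^{-2}\) on the tail. Once \(S\) is identified as a genuine \(\dimtotal\times\dimtotal\) positive semidefinite matrix with \(\|S\|\le\beta^2(\dimh)\), every remaining manipulation is elementary finite-dimensional linear algebra.
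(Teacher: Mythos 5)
Your proposal is correct, and its core is the same computation as the paper's: both identify, via the Schur complement of the block decomposition of \(\DF^{2}\), the representation \(\DPr^{-2}=\Pi_{\thetav}\DF_{\dimh}^{-1}(I-S)^{-1}\DF_{\dimh}^{-1}\Pi_{\thetav}^{\T}\) with \(S=\DF_{\dimh}^{-1}\AF_{\dimh}\HF_{\dimh}^{-2}\AF_{\dimh}^{\T}\DF_{\dimh}^{-1}\), use \(\bb{(\upsilonv\kappav)}\) to get \(\|S\|\le\beta^{2}(\dimh)\), and exploit the identity \(\DPr_{\dimh}\Pi_{\thetav}\DF_{\dimh}^{-2}\Pi_{\thetav}^{\T}\DPr_{\dimh}=I\) (the paper reaches the same \((I-S)^{-1}\) by maximizing out the tail coordinates in a variational characterization of \(\DPr^{-2}\DPr_{\dimh}\mathbf{v}\); your block-inverse formula is the same computation in different clothing). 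Where you genuinely depart from the paper is the endgame, and your version is sharper. The paper first bounds \(\|I-\DPr_{\dimh}\DPr^{-2}\DPr_{\dimh}\|\le\beta^{2}(\dimh)/(1-\beta^{2}(\dimh))\) and then passes to the target through the submultiplicative step \(\|I-\DPr_{\dimh}^{-1}\DPr^{2}\DPr_{\dimh}^{-1}\|\le\|I-\DPr_{\dimh}\DPr^{-2}\DPr_{\dimh}\|\,\|\DPr_{\dimh}^{-1}\DPr^{2}\DPr_{\dimh}^{-1}\|\), which obliges it to bound the last factor by \((1+\corrDF^{2}+\beta^{2}(\dimh))/(1-\corrDF^{2})\); this is the origin of the prefactor and of the implicit appeal to \((\bb{\AssId})\), and the paper asserts that factor bound without proof. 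You instead observe that \(\DPr_{\dimh}\DPr^{-2}\DPr_{\dimh}=I+F\) with \(F\succeq 0\) and \(\|F\|\le\beta^{2}(\dimh)/(1-\beta^{2}(\dimh))\), so the target equals \((I+F)^{-1}F\) and is bounded by \(\|F\|\) directly: no prefactor, no use of \((\bb{\AssId})\), and a strictly stronger conclusion that implies the stated one since the prefactor is at least one. The single caveat, which you flag yourself and which applies verbatim to the paper's own proof, is the operator-theoretic legitimacy of the Schur complement reduction on \(\R^{\dimtotal}\times l^{2}\) (bounded invertibility of \(\HF_{\dimh}^{2}\) on the tail); the paper buries the same issue in its unjustified reduction of the \(\argmax\) over \(l^{2}\) to an \(\argmax\) over \(\R^{\dimp+\dimh}\).
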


\begin{proof}
Take any \(\mathbf v\in \R^{\dimp}\) with \(\|\mathbf v\|\le 1\) and note that with \(\upsilonv=(\thetav,\etav,\kappav)\in  l^2\)
\begin{EQA}
&&\nquad\DPr^{-2}\DPr_{\dimh}\mathbf v\\
&=&\Pi_{\thetav}\argmax_{\upsilonv\in l^2}\left\{\thetav^\T \DPr_{\dimh}\mathbf v-\| \DF \upsilonv\|^2/2\right\}\\
&=&\Pi_{\thetav}\argmax_{\upsilonv\in \R^{\dimp+\dimh}}\left\{\thetav^\T \DPr_{\dimh}\mathbf v-\| \DF_{\dimh} \upsilonv\|^2/2-\inf_{\kappav}(\kappav^\T\AF_{\dimh}^{\T}\upsilonv+\| \HF_{\dimh} \kappav\|^2/2)\right\}\\
&=&\Pi_{\thetav}\argmax_{\upsilonv\in \R^{\dimp+\dimh}}\left\{\thetav^\T \DPr_{\dimh}\mathbf v-\| \DF_{\upsilonv\upsilonv} \upsilonv\|^2/2-\|\DF_{\kappav\kappav}^{-1/2}\AF_{\dimh}^{\T}\upsilonv \|^2/2\right\}\\
&\eqdef&\Pi_{\thetav}\argmax_{\upsilonv\in \R^{\dimp+\dimh}}g(\upsilonv)\eqdef \Pi_{\thetav}\upsilonvd.
\end{EQA}
Setting the gradient of \(g(\cdot)\) equal to zero gives that the maximizer \(\upsilonvd\in \R^{\dimp+\dimh}\) satisfies
\begin{EQA}[c]
\upsilonvd=\DF_{\dimh}^{-1}(I_{\dimtotal}-\DF_{\dimh}^{-1}\AF_{\dimh}\HF_{\dimh}^{-2}\AF_{\dimh}^{\T}
\DF_{\dimh}^{-1})^{-1}\DF_{\dimh}^{-1}\Pi_{\thetav}^\T\DPr_{\dimh}\mathbf v,
\end{EQA}
where \(\Pi_{\thetav}^\T:\,\R^{\dimp}\to \R^{\dimp+\dimh}\) denotes the canonical embedding of \(\R^{\dimp}\) into \(\R^{\dimp+\dimh}\). By assumption we have
\begin{EQA}[c]
\|(I_{\dimtotal}-\DF_{\dimh}^{-1}\AF_{\dimh}\HF_{\dimh}^{-2}\AF_{\dimh}^{\T}\DF_{\dimh}^{-1})^{-1} -I_{\dimtotal}\|\le \frac{\beta^2(\dimh)}{1-\beta^2(\dimh)}.
\end{EQA}
Note that \(\DPr_{\dimh}\Pi_{\thetav}\DF_{\dimh}^{-2}\Pi_{\thetav}^\T\DPr_{\dimh}\mathbf v= \mathbf v\) which gives
\begin{EQA}
&&\nquad\|(I-\DPr_{\dimh} \DPr^{-2}\DPr_{\dimh})\mathbf v \|=\|\mathbf v-\DPr_{\dimh}\Pi_{\thetav}\upsilonvd\|=\|\DPr_{\dimh}\Pi_{\thetav}\DF_{\dimh}^{-2}\Pi_{\thetav}^\T\DPr_{\dimh}\mathbf v-\DPr_{\dimh}\Pi_{\thetav}\upsilonvd\|\\
&=&\|\DPr_{\dimh}\Pi_{\thetav}\DF_{\dimh}^{-1}\bigg((I_{\dimtotal}-\DF_{\dimh}^{-1}
\AF_{\dimh}\HF_{\dimh}^{-2}\AF_{\dimh}^{\T}\DF_{\dimh}^{-1})^{-1}-I_{\dimtotal}\bigg)\DF_{\dimh}^{-2}\Pi_{\thetav}^\T\DPr_{\dimh}\mathbf v\|\\
&\le&  \frac{\beta^2(\dimh)}{1-\beta^2(\dimh)}\|\DPr_{\dimh}\Pi_{\thetav}\DF_{\dimh}^{-2}\Pi_{\thetav}^\T\DPr_{\dimh}\mathbf v\|\\
  &=& \frac{\beta^2(\dimh)}{1-\beta^2(\dimh)}\| \mathbf v\|= \frac{\beta^2(\dimh)}{1-\beta^2(\dimh)}.
\end{EQA}
This implies
\begin{EQA}[c]
\|I-\DPr_{\dimh}^{-1} \DPr^2\DPr_{\dimh}^{-1} \|\le \|I-\DPr_{\dimh} \DPr^{-2}\DPr_{\dimh}\|\|\DPr_{\dimh}^{-1} \DPr^2\DPr_{\dimh}^{-1}\|\le \frac{1+\corrDF^2+\beta^2(\dimh)}{1-\corrDF^2}\frac{\beta^2(\dimh)}{1-\beta^2(\dimh)}.
\end{EQA}

\end{proof}

\begin{lemma}
 \label{lem: convergence of breve matrix}
Assume \( \bb{(\upsilonv\kappav)} \) then we get with \( {\rr^*}^{2}= 4\CONST_{\kappavs}\dimh/ \gmi\)
\begin{EQA}[c]
\| I-\DPrp(\upsilonvs_{\dimh})^{-1}\DPrp(\upsilonvs)^2\DPrp(\upsilonvs_{\dimh})^{-1}\|\le \delta(\rr^*)/(1-2\delta(\rr^*)).
 \end{EQA}
\end{lemma}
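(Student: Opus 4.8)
The plan is to turn the self-concordance-type control of condition \( \bb{(\LL_{0})} \) into a two-sided spectral comparison of the two effective information matrices, and then to read off the norm bound. Throughout write \( \delta\eqdef\delta(\rr^*) \) and recall from the Remark in Section~\ref{sec: conditions} that the reference matrix for \( \bb{(\LL_{0})} \) is \( \DFc^{2}=\DF_{\dimh}^{2}(\upsilonvs) \) and the central point is \( \upsilonvd=\upsilonvs_{\dimh} \). The only input I really need is that \( \bb{(\LL_{0})} \) controls the sieve Hessian \emph{relative} to \( \DFc \) on a vicinity of \( \upsilonvs_{\dimh} \); note that this is the hypothesis doing the work here, rather than \( \bb{(\upsilonv\kappav)} \).

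The first step is a variational (Schur-complement) representation of the effective information. For any \( \upsilonv \) the matrix \( \DPrp(\upsilonv)^{2}=(\Pi_{\thetav}\DF_{\dimh}^{-2}(\upsilonv)\Pi_{\thetav}^{\T})^{-1} \) is the Schur complement of the \( \etav \)-block of \( \DF_{\dimh}^{2}(\upsilonv) \), so minimising the associated quadratic form over the free coordinates gives, for every \( \mathbf v\in\R^{\dimp} \),
\begin{EQA}[c]
\mathbf v^{\T}\DPrp(\upsilonv)^{2}\mathbf v=\min_{\mathbf w\in\R^{\dimtotal}:\ \Pi_{\thetav}\mathbf w=\mathbf v}\ \|\DF_{\dimh}(\upsilonv)\mathbf w\|^{2}.
\end{EQA}
This is exactly the device used in the proof of the previous lemma; its virtue here is that the constraint set \( \{\Pi_{\thetav}\mathbf w=\mathbf v\} \) does not depend on \( \upsilonv \).

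The second step applies \( \bb{(\LL_{0})} \). By construction \( \DF_{\dimh}(\upsilonvs)=\DFc \), so \( \DPrp(\upsilonvs)^{2} \) is precisely the Schur complement of \( \DFc^{2} \) and no localisation of \( \upsilonvs \) is needed. The point \( \upsilonvs_{\dimh} \) is the centre of \( \bb{(\LL_{0})} \) and hence lies trivially in the vicinity \( \Upsilon(\rr^*) \), so the condition applied at the centre gives \( \|\DFc^{-1}\DF_{\dimh}^{2}(\upsilonvs_{\dimh})\DFc^{-1}-I_{\dimtotal}\|\le\delta \), that is \( (1-\delta)\|\DFc\mathbf w\|^{2}\le\|\DF_{\dimh}(\upsilonvs_{\dimh})\mathbf w\|^{2}\le(1+\delta)\|\DFc\mathbf w\|^{2} \) for all \( \mathbf w\in\R^{\dimtotal} \). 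Feeding this uniform sandwich into the min-representation and minimising over the common constraint set yields \( (1-\delta)\DPrp(\upsilonvs)^{2}\preceq\DPrp(\upsilonvs_{\dimh})^{2}\preceq(1+\delta)\DPrp(\upsilonvs)^{2} \) in the positive-definite order. Conjugating by \( \DPrp(\upsilonvs_{\dimh})^{-1} \) and setting \( M\eqdef\DPrp(\upsilonvs_{\dimh})^{-1}\DPrp(\upsilonvs)^{2}\DPrp(\upsilonvs_{\dimh})^{-1} \), this becomes \( \tfrac{1}{1+\delta}I\preceq M\preceq\tfrac{1}{1-\delta}I \); hence every eigenvalue of \( M-I \) lies in \( [-\delta/(1+\delta),\,\delta/(1-\delta)] \) and \( \|I-M\|\le\delta/(1-\delta)\le\delta(\rr^*)/(1-2\delta(\rr^*)) \), which is the claim.

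The only step with genuine content is the transfer in the third paragraph: passing a \emph{relative} spectral bound on the full sieve Hessian to a relative bound on its Schur complement. The variational representation makes this automatic, since a uniform sandwich of the integrand survives minimisation over a fixed feasible set; by contrast, working directly from the explicit Schur-complement expression for \( \DPrp(\upsilonv)^{2} \) in terms of the \( (\thetav,\thetav) \), \( (\thetav,\etav) \) and \( (\etav,\etav) \) sub-blocks of \( \DF_{\dimh}^{2}(\upsilonv) \) would force one to track three coupled, \( \upsilonv \)-dependent blocks and is much less transparent. The secondary bookkeeping point is to exploit that \( \DFc^{2} \) is by definition the Hessian at \( \upsilonvs \) — so \( \upsilonvs \) enters exactly while \( \upsilonvs_{\dimh} \) enters only as the centre of the local set — which is what lets the single constant \( \delta(\rr^*) \) do all the work.
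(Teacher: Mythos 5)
Your proof is correct, and it reaches the claim by a genuinely different route from the paper's. The paper argues purely with operator-norm algebra: writing \(M\eqdef\DPrp(\upsilonvs_{\dimh})^{-1}\DPrp(\upsilonvs)^{2}\DPrp(\upsilonvs_{\dimh})^{-1}\) and \(\delta\eqdef\delta(\rr^*)\), it factors \(I-M=(M^{-1}-I)M^{-1}\), expresses \(M^{-1}-I\) through the representation \(\DPrp(\upsilonv)^{-2}=\Pi_{\thetav}\DF_{\dimh}^{-2}(\upsilonv)\Pi_{\thetav}^{\T}\), pulls out the factor \(\DF_{\dimh}^{-1}(\upsilonvs_{\dimh})\Pi_{\thetav}^{\T}\DPrp(\upsilonvs_{\dimh})\) (an isometry, hence of spectral norm one), and then needs two auxiliary Neumann-type bounds, \(\|\DF_{\dimh}(\upsilonvs_{\dimh})\DF_{\dimh}^{-2}(\upsilonvs)\DF_{\dimh}(\upsilonvs_{\dimh})\|\le 1/(1-\delta)\) and \(\|M\|\le(1-\delta)/(1-2\delta)\), whose product with \(\delta\) gives exactly \(\delta/(1-2\delta)\). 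Your variational argument replaces this bookkeeping by one structural observation: the sandwich \((1-\delta)\DF_{\dimh}^{2}(\upsilonvs)\preceq\DF_{\dimh}^{2}(\upsilonvs_{\dimh})\preceq(1+\delta)\DF_{\dimh}^{2}(\upsilonvs)\) survives minimisation over the fixed feasible set \(\{\Pi_{\thetav}\mathbf{w}=\mathbf{v}\}\) and therefore passes directly to the profile (Schur-complement) matrices, after which the claim is an eigenvalue computation. This buys three things: (i) a sharper constant, \(\delta/(1-\delta)\le\delta/(1-2\delta)\), so your bound implies the stated one; (ii) you apply \( \bb{(\LL_{0})} \) in exactly its stated orientation (conjugation by \(\DF_{\dimh}^{-1}(\upsilonvs)\)), whereas the paper's display tacitly uses the reversed conjugation \(\|I-\DF_{\dimh}^{-1}(\upsilonvs_{\dimh})\DF_{\dimh}^{2}(\upsilonvs)\DF_{\dimh}^{-1}(\upsilonvs_{\dimh})\|\le\delta\), which strictly requires the inversion step that your two-sided sandwich makes explicit; (iii) no localisation is needed, since \(\upsilonvs_{\dimh}\) is the centre of the vicinity \(\Theta(\rr^*)\) — the paper instead also invokes Lemma \ref{lem: apriori bias in sieve approach} at this point, which your reading (consistent with the Remark in Section \ref{sec: conditions}) shows to be dispensable. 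Finally, your observation that \( \bb{(\upsilonv\kappav)} \) is not the operative hypothesis matches the paper: its own proof of this lemma likewise uses only \( \bb{(\LL_{0})} \).
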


\begin{proof}
Denote \(\DPr_{\dimh\dimh}\eqdef\DPrp(\upsilonvs_{\dimh})\), \(\DF_{\dimh\dimh}\eqdef\DF_{\dimh}(\upsilonvs_{\dimh})\) and \(\DPrp\eqdef\DPrp(\upsilonvs)\), \(\DF_{\dimh}\eqdef\DF_{\dimh}(\upsilonvs)\). 
We simply calculate
\begin{EQA}
\|I-\DPr_{\dimh\dimh}^{-1} \DPrp^2\DPr_{\dimh\dimh}^{-1} \|&\le& \|\DPr_{\dimh\dimh}\DPrp^{-2}\DPr_{\dimh\dimh}-I_{\dimp}\|\|\DPr_{\dimh\dimh}^{-1} \DPrp^2\DPr_{\dimh\dimh}^{-1}\|.
\end{EQA}
Now we get with condition \( \bb{(\LL_{0})} \) and Lemma \ref{lem: apriori bias in sieve approach}
\begin{EQA}
 \|\DPr_{\dimh\dimh}\DPrp^{-2}\DPr_{\dimh\dimh}-I_{\dimp}\|& =&\|\DPr_{\dimh\dimh}\left(\Pi_{\thetav} \DF_{\dimh}^{-2}\Pi_{\thetav}^\T-\Pi_{\thetav}\DF_{\dimh\dimh}^{-2}\Pi_{\thetav}^\T\right)\DPr_{\dimh\dimh}\|\\
 &=&\|\DPr_{\dimh\dimh}\left(\Pi_{\thetav}\DF_{\dimh\dimh}^{-1}\left\{I-\DF_{\dimh\dimh}\DF_{\dimh}^{-2}\DF_{\dimh\dimh} \right)\DF_{\dimh\dimh}^{-1}\Pi_{\thetav}^\T\right)\DPr_{\dimh\dimh}\|\\
 &\le&\|I-\DF_{\dimh\dimh}\DF_{\dimh}^{-2}\DF_{\dimh\dimh}\|\\
 &\le&\|I-\DF_{\dimh\dimh}^{-1}\DF_{\dimh}^{2}\DF_{\dimh\dimh}^{-1}\|\|\DF_{\dimh\dimh}\DF_{\dimh}^{-2}\DF_{\dimh\dimh}\|\\
 &\le& \delta(\rr^*)\|\DF_{\dimh\dimh}\DF_{\dimh}^{-2}\DF_{\dimh\dimh}\|.
\end{EQA}
This implies
\begin{EQA}
\|\DF_{\dimh\dimh}\DF_{\dimh}^{-2}\DF_{\dimh\dimh}\|\le1/(1-\delta(\rr^*)),&& \|\DPr_{\dimh\dimh}^{-1} \DPrp^2\DPr_{\dimh\dimh}^{-1}\|\le (1-\delta(\rr^*))/(1-2\delta(\rr^*)).
\end{EQA}
Together this gives the claim.
\end{proof}

\bibliography{semiquellen}
  \end{document}